\documentclass[11pt,reqno,oneside]{amsart}

\usepackage{graphicx,subfigure,threeparttable}
\usepackage{amssymb}
\usepackage{epstopdf}
\graphicspath{{direct_result_figure/}}
\usepackage[a4paper, total={6in, 9in}]{geometry}

\usepackage{algorithm}
\usepackage{algpseudocode}
\usepackage{amsmath}

\usepackage{booktabs} 
\usepackage{subfig} 

\usepackage{amsfonts}

\usepackage{amsmath,amsthm,mathrsfs,amssymb,cite}
\usepackage[usenames]{color}

\newtheorem{thm}{Theorem}[section]

\newtheorem{lem}{Lemma}[section]

\theoremstyle{definition}

\theoremstyle{remark}

\newtheorem{rem}{Remark}[section]
\numberwithin{equation}{section}

\numberwithin{equation}{section}

\newcounter{saveeqn}


\newcommand{\beq}{\begin{equation}}
\newcommand{\eeq}{\end{equation}}

\title{A novel direct imaging method for passive inverse obstacle scattering problem}

\author{Yunwen Yin}
\address{School of Mathematics, Southeast University, Nanjing, China}
\email{yunwenyin@seu.edu.cn}

\author{Liang Yan}
\address{School of Mathematics, Southeast University, Nanjing, China}
\email{yanliang@seu.edu.cn}

\date{} 

\begin{document}
\maketitle

\begin{abstract}

This paper investigates the inverse scattering problem of recovering a sound-soft obstacle using passive measurements taken from randomly distributed point sources. The randomness introduced by these sources poses significant challenges, leading to the failure of classical direct sampling methods that rely on scattered field measurements. To address this issue, we introduce the Doubly Cross-Correlating Method (DCM), a novel direct imaging scheme that consists of two major steps. Initially, DCM creates a cross-correlation between two passive measurements.  This specially designed cross-correlation effectively handles the uncontrollability of incident sources and connects to the active scattering model via the Helmholtz-Kirchhoff identity. Subsequently, this cross-correlation is used to create a correlation-based imaging function that can qualitatively identify the obstacle. The stability and resolution of DCM are theoretically analyzed.  Extensive numerical examples, including scenarios with two closely positioned obstacles and multiscale obstacles, demonstrate that DCM is computationally efficient, stable, and fast.

\noindent{\bf Keywords:}~~ inverse obstacle scattering problem; passive imaging; direct imaging scheme; cross-correlation.

\medskip
\end{abstract}

\section{Introduction}
Inverse scattering problems arise in many practical applications, for example, radar imaging \cite{B1}, nondestructive testing \cite{ABF} and medical imaging \cite{Kuchment}. One important class of such inverse problems is the inverse obstacle scattering problem that focuses on determining the location and shape of the object. This problem is highly non-linear and severely ill-posed, which means that its solution may not exist or may not be unique, and more importantly may not be stable with respect to the noise. As a result, developing effective and stable numerical strategies is challenge but necessary. Existing reconstruction approaches can be broadly classified into three types: iterative methods, decomposition methods, and sampling-type methods.  Iterative methods, such as Newton-type iterative method \cite{Kress,KR} and recursive linearization method \cite{Bao_Lin_Mefire_1,Borges_Greengard}, need to repeatedly recall the forward solver during their inversion processes, and thus they are typically faced with the curse of the high computational costs. Decomposition methods reformulate the inverse problem as a non-linear optimization problem containing two parts: the first part is to recover the scattered field from the farfield pattern, and the second part uses the boundary condition and the recovered scattered field to determine the boundary of the scatterer. It is commendable that decomposition methods are capable of not only well avoiding a sequence of forward model evaluations, but also providing satisfactory reconstructions. We refer the readers to \cite{GMZ,KK2,KK1,S1} for the relevant results of decomposition methods. Rather,  sampling-type methods, also known as qualitative methods, such as the factorization method\cite{kirsch2007factorization,KL1,Yangjiaqing1,zhang_zhang1}, the direct sampling method \cite{ji_xia1,LZ,lxd1,liu_meng_zhang1}  and the orthogonality sampling method \cite{potthast1}, solve the inverse obstacle scattering problem from a completely different perspective. They are devoted to designing a specific indicator function to qualitatively visualize the unknown obstacle.

The aforementioned reconstruction approaches have had enormous success with the inverse obstacle scattering problem. However, they are based on active imaging, which means that the incident field described by a time harmonic plane wave or point source is under control. Unfortunately, in many practical scenarios, the incident wave is random and unavailable, so the receivers passively record the response of the unknown impenetrable obstacle. The problem of detecting an obstacle using such passive measurements is known as passive imaging. In general, fewer studies on passive imaging have been conducted than on active imaging. In \cite{GHM1}, a novel linear sampling method was proposed to solve inverse obstacle scattering with random sources using the Helmholtz-Kirchhoff identity and cross-correlation computed at two measurements. This work greatly contributes to the study of inverse scattering problems. Recently, the concept has been extended to passive imaging with small random scatterers \cite{GHM2}. We refer to \cite{GHM1,GHM2} for more detailed discussions of this method. However, because the underlying model error associated with the passive model and the approximate active model is ignored, the reconstruction will be affected accordingly.  In order to overcome this, a Bayesian model error method was presented in \cite{Yin_Yan_1} with an online estimating of the model error. We strongly recommend \cite{GP1} for a comprehensive understanding of various interesting and meaningful passive inverse scattering problems. Furthermore, we would like to point out that the introduced passive problem can be solved in terms of the co-inversion of the obstacle and its excitation sources\cite{CG1,CGLZ1,Zhang_Chang_Guo1,ZGWC1}.

In this work, we introduce a new direct sampling method called Doubly Cross-Correlating Method (DCM) for the passive inverse obstacle scattering with randomly distributed incident sources. The inherent randomness of these sources makes the use of scattered field data for direct detection impractical.  To overcome this challenge, DCM constructs cross-correlation data from passive measurements. By leveraging the Helmholtz-Kirchhoff identity, this cross-correlation can be tightly connected to the active scattering model. Consequently, the methodologies developed for active inverse scattering problems can be utilized, leading to the establishment of a correlation-based imaging functional inspired by by the reverse time migration method \cite{Chen_zhi_ming2,Chen_zhi_ming4,Chen_zhi_ming1,Chen_zhi_ming6,Chen_zhi_ming5,Chen_zhi_ming3,Chen_zhi_ming7,li_jian_liang1,li_wu_yangjiaqing1,li_yangjiaqing1}. Specifically, we first back-propagate the well-constructed cross-correlation into the background medium and then formulate an imaging functional using the imaginary part of the cross-correlation between the fundamental solution and the back-propagated field. To the best of our knowledge, the proposed DCM represents the first approach for reconstructing impenetrable obstacles in passive imaging scenarios from a direct sampling perspective. Unlike the method presented by \cite{GHM1}, DCM avoids solving the linear integral system. The main features and novelties of our work are summarized as follows:

\vspace{0.25cm}
\begin{itemize}
\item We propose a novel direct imaging method to recover the location and shape of sound-soft obstacles in passive scenarios. This method combines cross-correlation from two passive measurements, the Helmholtz-Kirchhoff identity, and the principles of reverse time migration.
\item Instead of directly using passive measurements, we employ a specially designed cross-correlation that effectively addresses the challenges posed by the randomness and uncontrollability of incident point sources.
\item Theoretical analyses of resolution and stability are rigorously proved. Extensive numerical examples demonstrate the method's effectiveness and efficiency. Our approach achieves satisfactory reconstructions even for complex scenarios such as two closely positioned obstacles and multiscale obstacles.
\item The proposed direct imaging method involves only matrix multiplication, making it computationally fast and simple to implement. Additionally, our method shows significant stability in the presence of noise.
\end{itemize}
\vspace{0.25cm}

The rest of the paper is organized as follows. In Section 2, the passive scattering model with randomly distributed point sources is introduced. In Section 3, the DCM for the passive inverse scattering problem is presented, along with the resolution and stability analyses.   Numerical experiments are conducted in Section 4 to show the promising features of the proposed DCM. Finally, we make some conclusions in Section 5.

\section{Problem setup}
In this section, we shall present the mathematical descriptions of the direct and inverse scattering problem.
\subsection{Direct scattering problem}
Assume that $D\subset \mathbb{R}^{2}$ is a bounded simply connected sound-soft obstacle with a $C^{2}$-boundary $\partial{D}$. Given the time-harmonic point source located at $z\in \mathbb{R}^{2}\setminus \overline{D}$, the incident field $u^{i}$ can be described as
\begin{equation}\label{eq:inc}
    \begin{aligned}
    u^{i}(x,z):=\phi(x,z)=\frac{{\mathrm{i}}}{{4}}H_{0}^{(1)}(k|x-z|),\ \  x\in \mathbb{R}^{2}\setminus \{\overline{D}\cup \{z\}\},
    \end{aligned}
\end{equation}
where $\phi(x,z)$ denotes the outgoing Green function of Helmholtz equation in $\mathbb{R}^2$, $H_{0}^{(1)}$ is the Hankel function of the first kind of order zero, $k\in \mathbb{R}_{+}$ is the wave number and $\mathrm{i}:=\sqrt{-1}$ is the imaginary unit. The presence of the sound-soft obstacle $D$ will interrupt the propagation of the incident wave $u^{i}$, leading to the exterior scattered field $u^{s}$. The direct scattering problem is to find $u^{s}=u-u^{i}$ that satisfies the following Helmholtz system:
\begin{equation}\label{eq:helmholtz}
    \begin{aligned}
    \bigtriangleup u^{s}(x,z)+k^{2}u^{s}(x,z)=0\ \ \mbox{in}\  \mathbb{R}^2 \backslash \overline{D},
    \end{aligned}
\end{equation}
\begin{equation}\label{eq:bc}
    \begin{aligned}
    u^{s}(x,z)=-u^{i}(x,z)\ \ \mbox{on}\  \partial{D},
    \end{aligned}
\end{equation}
\begin{equation}\label{eq:rc}
    \begin{aligned}
    \lim_{r=|x|\rightarrow\infty}\sqrt{r}\Big( \frac{\partial{u^{s}(x,z)}}{\partial{r}}-\mathrm{i}ku^{s}(x,z)\Big)=0,
    \end{aligned}
\end{equation}
where $u$ is the total field and the Sommerfeld radiation condition \eqref{eq:rc} describes the outgoing nature of the scattered field $u^{s}$ and holds uniformly in all directions. The well-posedness of the forward scattering problem \eqref{eq:helmholtz}-\eqref{eq:rc} is well known (cf. \cite{DRIA,McLean1}).

\subsection{Passive inverse obstacle scattering problem}
We are now going to introduce the considered passive inverse obstacle scattering problem. Let $\partial{B}$, which is a circle of the radius $r_{B}$ and contains $D$ in its interior, be the measurement curve. Measurement points $x_{j}(j=1,\cdots,J)$ are located on $\partial{B}$. Let $z_{l}(l=1,\cdots,L)$ be the positions of incident point sources $\phi(x,z_{l})$ that are randomly distributed on $\Sigma$, where $\Sigma$ denotes the incident curve that encloses $\partial{B}$. Moreover, the random position $z_{l}$ is far away from the obstacle $D$. Then, the corresponding inverse problem is to determine the location and shape of the sound-soft obstacle $D$ from the passively collected total field $u(x_{j},z_{l})$. We refer to Fig. \ref{fig:passive_active_pic}(a) for the specific illustration of this passive inverse scattering problem. In order to more intuitively describe the difference of passive imaging and active imaging, we also present illustration of active imaging in Fig. \ref{fig:passive_active_pic}(b). In the active setup, measurement points $x_{j}$ and source positions $x_{m}$ are both controlled and located on $\partial{B}$. Clearly, the main difference of these two problems is whether excitation sources are controlled. Unfortunately, we can not place active excitation sources particularly in the practical scenario that one needs to face the limitation of safety or environment, and meanwhile passive imaging viewpoint can be naturally adopted.
\begin{figure}[t]
    \centering
    \subfigure[Passive imaging]{
    \includegraphics[width=0.3\textwidth]{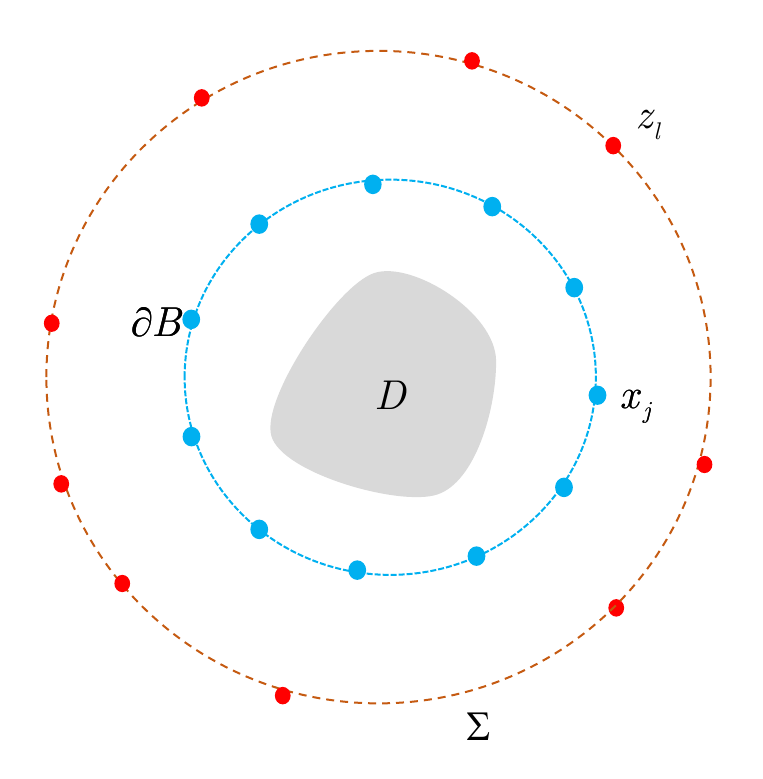}
}
    \subfigure[Active imaging]{
    \includegraphics[width=0.375\textwidth]{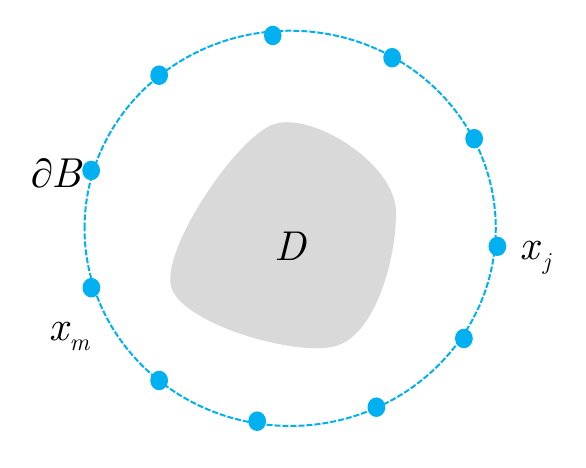}
}
    \caption{\label{fig:passive_active_pic}In the passive imaging (Left), the incident sources are random and uncontrolled, but in the active imaging (Right), the incident sources are controlled and fixed.}
\end{figure}

In this paper, we want to design a direct sampling method for solving this problem.  The direct sampling method has emerged as one of the most popular reconstruction techniques for inverse scattering problems due to its efficient computing capabilities. The fundamental idea is to develop a distinct imaging functional that can be used to qualitatively visualize the scatterer by applying its values at various probing points.  However, in our considered passive imaging setting, directly using the scattered field $u^{s}(x_{j},z_{l})$ to design the traditional direct sampling method may not be a good choice because of the randomness of the position $z_{l}$. In order to overcome this challenge, this paper proposes a novel direct imaging scheme whose specific details are presented in the following section.

\section{The doubly cross-correlating method}
In this section, we introduce our proposed Doubly Cross-Correlating Method (DCM) for addressing the passive inverse obstacle scattering problem. The DCM framework primarily involves two key steps: initially constructing a cross-correlation between two passive measurements, and subsequently utilizing this cross-correlation to design a correlation-based indicator functional. Essentially, our method employs a secondary imaging correlation derived from the primary data correlation computed at two passive receivers, thus aptly termed ``doubly cross-correlating". Additionally, we will present a thorough analysis of the resolution and stability of this method, highlighting its robustness and efficacy in practical applications.

\subsection{DCM for the inverse problem}
To describe our method, we use mathematical notations similar to works
\cite{GHM1,Yin_Yan_1}. Denote by $\Omega$ a sampling domain that contains the obstacle $D$. Before we continue, we recall $\phi(x,y)=\frac{{\mathrm{i}}}{{4}}H_{0}^{(1)}(k|x-y|)$ that is the fundamental solution of the Helmholtz equation
\begin{equation}\label{eq:back_fundamental_helmholtz}
    \begin{aligned}
    \bigtriangleup \phi(x,y)+k^{2}\phi(x,y)=-\delta_{y}(x)\ \ \mbox{in}\  \mathbb{R}^2,
    \end{aligned}
\end{equation}
and satisfies the Sommerfeld radiation condition
\begin{equation}\label{eq:back_fundamental_rc}
    \begin{aligned}
    \lim_{r=|x|\rightarrow\infty}\sqrt{r}(\frac{\partial{\phi}}{\partial{r}}-\mathrm{i}k\phi)=0,
    \end{aligned}
\end{equation}
where $\delta_{y}(\cdot)$ is the Dirac source located at $y$. In order to alleviate the difficulty caused by random sources at positions $z_{l}$, we compute the cross-correlation matrix with its entries $C_{jm}$ defined at $x_{j}$ and $x_{m}$ by
\begin{equation}\label{eq:C}
    \begin{aligned}
    C_{j m}=\frac{2 \mathrm{i} k|\Sigma|}{L} \sum_{l=1}^L \overline{u\left(x_j, z_{l}\right)} u\left(x_m, z_{l}\right)-\left[\phi\left(x_j, x_m\right)-\overline{\phi\left(x_j, x_m\right)}\right], \quad 1 \leq j, m \leq J,
    \end{aligned}
\end{equation}
where $|\Sigma|$ denotes the area of the surface $\Sigma$ on which incident sources are randomly distributed, and $\phi\left(x_j, x_m\right)$ is the measurement collected at $x_{j}$ of the point source located at $x_{m}$.

In what follows, we shall connect the cross-correlation \eqref{eq:C} to the active scattering model. To this end, let $x, y$ be far from $\Sigma$. Then, the Helmholtz-Kirchhoff identity for $\phi(x,y)$ is described as \cite{GHM1,Yin_Yan_1}
\begin{equation}\label{eq:HK_inc}
    \begin{aligned}
    \phi(x, y)-\overline{\phi(x, y)}=2 \mathrm{i} k \int_{\Sigma} \overline{\phi(x, z)} \phi(y, z) \mathrm{d} s(z).
    \end{aligned}
\end{equation}
We remark that \eqref{eq:HK_inc} holds for the case that $x$ and $y$ are far from $\Sigma$, namely for $z\in\Sigma$, $|x-z|$ and $|y-z|$ are both required to be large enough. Viewing $\phi(x,y)$ as the incident source located at $y$, the corresponding total field measured at $x$ is given by $u(x,y)=\phi(x,y)+u^{s}(x,y)$. The Helmholtz-Kirchhoff identity also holds for $u(x,y)$ \cite{GHM1,Yin_Yan_1}, that is,
\begin{equation}\label{eq:HK_total}
    \begin{aligned}
    u(x, y)-\overline{u(x, y)}=2 \mathrm{i} k \int_{\Sigma} \overline{u(x, z)} u(y, z) \mathrm{d} s(z).
    \end{aligned}
\end{equation}
The formulas \eqref{eq:HK_inc} and \eqref{eq:HK_total} immediately give rise to
\begin{equation}\label{eq:HK_scattered}
    \begin{aligned}
    u^s(x, y)-\overline{u^s(x, y)}=2 \mathrm{i} k \int_{\Sigma} \overline{u(x, z)} u(y, z) \mathrm{d} s(z)-[\phi(x, y)-\overline{\phi(x, y)}].
    \end{aligned}
\end{equation}
By replacing $x$ and $y$ by $x_{j}$ and $x_{m}$ respectively, we obtain
\begin{equation}\label{eq:HK_scattered_jm}
    \begin{aligned}
    u^s(x_{j}, x_{m})-\overline{u^s(x_{j}, x_{m})}=2 \mathrm{i} k \int_{\Sigma} \overline{u(x_{j}, z)} u(x_{m}, z) \mathrm{d} s(z)-[\phi(x_{j}, x_{m})-\overline{\phi(x_{j}, x_{m})}].
    \end{aligned}
\end{equation}
Note that $u^s(x_{j}, x_{m})$ can be regarded as the active scattered field measured at $x_{j}$ due to the illumination by an active source located at $x_{m}$. For the sake of the following use, we define
\begin{equation}\label{eq:N_jm}
    \begin{aligned}
    N^{s}_{jm}=u^s(x_{j}, x_{m})-\overline{u^s(x_{j}, x_{m})},\quad 1 \leq j, m \leq J.
    \end{aligned}
\end{equation}
Therefore, the relationship between the cross-correlation \eqref{eq:C} and the imaginary scattered field \eqref{eq:N_jm} is
\begin{equation}\label{eq:app_c_scattered}
    \begin{aligned}
    C_{jm} \approx N^{s}_{jm}.
    \end{aligned}
\end{equation}
In fact, \eqref{eq:C} is the discrete approximation form of \eqref{eq:N_jm}, and the approximation accuracy between these two parts relies on the total number $L$ and the random position $z_{l}$. See Fig. \ref{fig:passive_active_pic}(b) for the illustration of the introduced active scattering problem.

Before we proceed, let us explain why traditional direct sampling methods will fail in passive imaging.  Indeed, traditional direct sampling methods are constructed by using the scattered field $u^{s}(x_{j},z_{l})$, but in passive imaging setup only total field $u(x_{j},z_{l})$ is accessible and the scattered field can not be attainable due to the randomness of $z_{l}$. Even though one can have $u^{s}(x_{j},z_{l})$, the influence of randomness of $z_{l}$ still exists. To show this, we write the indicator functional of classical reverse time migration method by using $u^{s}(x_{j},z_{l})$, which is in the form
\begin{equation}\label{eq:passive_direct_scattered_rtm_ind}
    \begin{aligned}
    I_{RTM}(\tau)=-k^{2}\mathrm{Im}\left\{\frac{2\pi r_{B}|\Sigma|}{J}\sum\limits_{l=1}^{L}\sum\limits_{j=1}^{J}\phi(\tau,z_{l})\phi(\tau,x_{j})\overline{u^{s}(x_{j},z_{l})}\right\},
    \end{aligned}
\end{equation}
where $\tau\in\Omega$ is the sampling point. Clearly, in the imaging process this indicator \eqref{eq:passive_direct_scattered_rtm_ind} shall inevitably fail since positions $z_{l}$ are unknown. The above discussed two issues of traditional direct sampling approaches can be overcome by employing \eqref{eq:C} instead of $u^{s}(x_{j},z_{l})$. Thus in what follows, we shall introduce the specific design of our imaging functional. Because of the approximation relationship \eqref{eq:app_c_scattered}, we can naturally benefit from the existing active imaging methods. Comparable to the reverse time migration method, our imaging algorithm contains two phases. The first phase is the back-propagation stage in which the complex conjugated data $\overline{C_{jm}}$ is back-propagated into the domain. Specifically, for $m=1,2,\cdots,J$, we compute the solution $v_{b}$ to the following problem:
\begin{equation}\label{eq:back_helmholtz}
    \begin{aligned}
    \bigtriangleup v_{b}(x,x_{m})+k^{2}v_{b}(x,x_{m})=\frac{2\pi r_{B}}{J}\sum\limits_{j=1}^{J}\overline{C_{jm}}\delta_{x_{j}}(x)\ \ \mbox{in}\  \mathbb{R}^2,
    \end{aligned}
\end{equation}
\begin{equation}\label{eq:back_rc}
    \begin{aligned}
    \lim_{r=|x|\rightarrow\infty}\sqrt{r}(\frac{\partial{v_{b}}}{\partial{r}}-\mathrm{i}kv_{b})=0.
    \end{aligned}
\end{equation}
By \eqref{eq:back_fundamental_helmholtz}, \eqref{eq:back_fundamental_rc} and the linearity, the solution $v_b$ to \eqref{eq:back_helmholtz} and \eqref{eq:back_rc} can be represented by
\begin{equation}\label{eq:back_vb_solution}
    \begin{aligned}
    v_{b}(x,x_{m})=-\frac{2\pi r_{B}}{J}\sum\limits_{j=1}^{J}\overline{C_{jm}}\phi(x,x_{j}).
    \end{aligned}
\end{equation}
Then for $\tau\in\Omega$, the imaginary part of the cross-correlation of the fundamental solution $\phi(\tau,x_{m})$ and the back-propagated field $v_{b}(\tau,x_{m})$ is employed in the second phase, which has the form
\begin{equation}\label{eq:passive_DCM_ind}
    \begin{aligned}
    I(\tau)&=k^{2}\mathrm{Im}\left\{\frac{2\pi r_{B}}{J}\sum\limits_{m=1}^{J}\phi(\tau,x_{m})v^{b}(\tau,x_{m})\right\}\\
    &=-k^{2}\mathrm{Im}\left\{\frac{(2\pi)^{2} r_{B}^{2}}{J^{2}}\sum\limits_{m=1}^{J}\sum\limits_{j=1}^{J}\phi(\tau,x_{m})\phi(\tau,x_{j})\overline{C_{jm}}\right\}.
    \end{aligned}
\end{equation}
The formula \eqref{eq:passive_DCM_ind} is the main imaging functional of the proposed DCM. \eqref{eq:C} and \eqref{eq:passive_DCM_ind} are the so-called two cross-correlations of DCM.

\subsection{The resolution analysis and stability analysis}
In this subsection, we first investigate the resolution analysis of the proposed DCM, which aims to show different properties of indicator functional \eqref{eq:passive_DCM_ind} when probing point $\tau$ approaches the boundary $\partial D$ and goes away from $\partial D$. To this end, in the following we estimate the bound of \eqref{eq:passive_DCM_ind} when $\tau$ changes. Then we further study the stability analysis that indicates our DCM is robust to the added noise. Considering the relationship \eqref{eq:app_c_scattered}, we define
\begin{equation}\label{eq:Ns_discrete_DCM_ind}
    \begin{aligned}
    I_{N^{s}}(\tau)=-k^{2}\mathrm{Im}\left\{\frac{(2\pi)^{2} r_{B}^{2}}{J^{2}}\sum\limits_{m=1}^{J}\sum\limits_{j=1}^{J}\phi(\tau,x_{m})\phi(\tau,x_{j})\overline{N^{s}_{jm}}\right\}.
    \end{aligned}
\end{equation}
If the sampling point $\tau$ is contained inside the curve $\partial{B}$, then $\phi(\tau,x_{j})$ and $\phi(\tau,x_{m})$ are both smooth functions. Furthermore, the imaginary scattered field $N^{s}_{jm}$ is also smooth. As a result, $I_{N^{s}}$ can be regarded as the discrete approximation of the following continuous indicator function:
\begin{equation}\label{eq:Ns_continuous_DCM_ind}
    \begin{aligned}
    \hat{I}_{N^{s}}(\tau)&=-k^{2}\mathrm{Im}\int_{\partial{B}}\int_{\partial{B}}\phi(\tau,x_{m})\phi(\tau,x_{j})\overline{N^{s}_{jm}}\mathrm{d}s(x_{m})\mathrm{d}s(x_{j})\\
    &=-k^{2}\mathrm{Im}\int_{\partial{B}}\int_{\partial{B}}\phi(\tau,x_{m})\phi(\tau,x_{j})\overline{u^{s}(x_{j},x_{m})}\mathrm{d}s(x_{m})\mathrm{d}s(x_{j})\\
    &+k^{2}\mathrm{Im}\int_{\partial{B}}\int_{\partial{B}}\phi(\tau,x_{m})\phi(\tau,x_{j})u^{s}(x_{j},x_{m})\mathrm{d}s(x_{m})\mathrm{d}s(x_{j}).
    \end{aligned}
\end{equation}

Clearly, the first term at the right hand side of \eqref{eq:Ns_continuous_DCM_ind} is the standard indicator of reverse time migration method that is presented in \cite{Chen_zhi_ming2}. Its analysis is also thoroughly studied in \cite{Chen_zhi_ming2}. Thus, for the following resolution analysis, we mainly focus on estimating the second term. To this end, we first present some lemmas for the following use.

\begin{lem}[\cite{DRIA,McLean1}]\label{lem:direct_well_posedness}
Let $\widetilde{\omega}\in H^{1/2}(\partial D)$, the forward obstacle scattering problem:
\begin{equation}\label{eq:helmholtz_omega}
    \begin{aligned}
    \bigtriangleup \omega+k^{2}\omega=0\ \ \mbox{in}\  \mathbb{R}^2 \backslash \overline{D},\ \ \omega=\widetilde{\omega} \ \ \mbox{on}\  \partial{D},
    \end{aligned}
\end{equation}
\begin{equation}\label{eq:rc_omega}
    \begin{aligned}
    \lim_{r=|x|\rightarrow\infty}\sqrt{r}( \frac{\partial{\omega}}{\partial{r}}-\mathrm{i}k\omega)=0,
    \end{aligned}
\end{equation}
admits a unique solution $\omega\in H^{1}_{loc}(\mathbb{R}^2 \backslash \overline{D})$. Furthermore, there exists a constant $C\in\mathbb{R}_{+}$ such that
\begin{equation}
    \begin{aligned}
    \left\|\frac{\partial \omega}{\partial \nu}\right\|_{H^{-1/2}(\partial D)}\leq C\|\widetilde{\omega}\|_{H^{1/2}(\partial D)},
    \nonumber
    \end{aligned}
\end{equation}
where $\nu$ denotes the unit outer normal to $\partial D$.
\end{lem}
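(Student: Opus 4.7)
The plan is to combine the classical well-posedness theory for the exterior Helmholtz--Dirichlet problem with a duality argument for the normal-derivative bound, as in McLean (Chapter 9) and Colton--Kress. Uniqueness comes first: assuming $\widetilde{\omega}=0$, applying Green's first identity to $\omega$ and $\overline{\omega}$ on $(\mathbb{R}^2\setminus\overline{D})\cap B_R$ and taking the imaginary part of the resulting boundary integral on $\partial B_R$, the Sommerfeld radiation condition \eqref{eq:rc_omega} forces $\int_{|x|=R}|\omega|^2\,ds \to 0$ as $R\to\infty$; Rellich's lemma then gives $\omega\equiv 0$ in the unbounded component, and unique continuation extends this to all of $\mathbb{R}^2\setminus\overline{D}$.

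For existence and the $H^1_{loc}$ stability bound, I would truncate to the bounded domain $\Omega_R := B_R\setminus\overline{D}$ with $D\Subset B_R$, replacing the radiation condition by the Dirichlet-to-Neumann operator $T_k$ acting on traces on $\partial B_R$. Using the trace theorem, lift $\widetilde{\omega}$ to some $\Phi\in H^1(\Omega_R)$ with $\|\Phi\|_{H^1(\Omega_R)}\leq C\|\widetilde{\omega}\|_{H^{1/2}(\partial D)}$; then $\omega-\Phi$ belongs to the subspace of $H^1(\Omega_R)$ functions vanishing on $\partial D$ and satisfies the variational problem associated with the sesquilinear form $a(u,v)=\int_{\Omega_R}(\nabla u\cdot\nabla\bar v - k^2 u\bar v)\,dx - \langle T_k u,v\rangle_{\partial B_R}$. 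This form satisfies a G{\aa}rding inequality, so the Fredholm alternative applies; the uniqueness result rules out the kernel, yielding the continuous-dependence estimate $\|\omega\|_{H^1(\Omega_R)}\leq C\|\widetilde{\omega}\|_{H^{1/2}(\partial D)}$ and, by arbitrariness of $R$, membership in $H^1_{loc}(\mathbb{R}^2\setminus\overline{D})$.

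Finally, for the normal-derivative bound, interpret $\partial\omega/\partial\nu\in H^{-1/2}(\partial D)$ via duality. Given any test $\psi\in H^{1/2}(\partial D)$, extend it to $\Psi\in H^1(\Omega_R)$ with vanishing trace on $\partial B_R$ and $\|\Psi\|_{H^1(\Omega_R)}\leq C\|\psi\|_{H^{1/2}(\partial D)}$. Green's first identity, together with $\Delta\omega=-k^2\omega$ in $\Omega_R$, gives
\begin{equation*}
\langle \partial_\nu\omega,\psi\rangle_{\partial D} = -\int_{\Omega_R}\bigl(\nabla\omega\cdot\nabla\overline{\Psi} - k^2\omega\overline{\Psi}\bigr)\,dx,
\end{equation*}
so Cauchy--Schwarz combined with the $H^1$ bound on $\omega$ yields $|\langle\partial_\nu\omega,\psi\rangle_{\partial D}|\leq C\|\widetilde{\omega}\|_{H^{1/2}(\partial D)}\|\psi\|_{H^{1/2}(\partial D)}$; taking the supremum over $\psi$ proves the claim. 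The main obstacle will be verifying the G{\aa}rding inequality with the correct sign convention for the DtN term so that compactness arguments cleanly combine with the uniqueness step. Once $H^1_{loc}$ well-posedness is in place, the normal-derivative estimate reduces to a trace-theorem computation.
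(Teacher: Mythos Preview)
The paper does not actually prove this lemma; it is stated as a known result with citations to Colton--Kress and McLean, followed only by the remark that ``we refer to \cite{DRIA,McLean1} for more relevant discussions.'' Your sketch---Rellich's lemma for uniqueness, DtN truncation plus a G{\aa}rding/Fredholm argument for existence and the $H^1_{loc}$ estimate, and a duality computation via Green's first identity for the $H^{-1/2}$ bound on $\partial_\nu\omega$---is exactly the standard route taken in those references, so your proposal is correct and aligned with the intended argument.
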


Lemma \ref{lem:direct_well_posedness} depicts the well-posedness of the forward scattering problem. We refer to \cite{DRIA,McLean1} for more relevant discussions. Define the Dirichlet-to-Neumann mapping $T:H^{1/2}(\partial D)\rightarrow H^{-1/2}(\partial D)$ for any $\widetilde{\omega}\in H^{1/2}(\partial D)$ by $T(\widetilde{\omega})=\frac{\partial \omega}{\partial \nu}$. It follows from Lemma \ref{lem:direct_well_posedness} that $T$ is a bounded linear operator. Denote the norm by $\|T\|$. Since the obstacle $D$ and the sampling point $\tau$ are both contained inside $\partial B$, we define $h_{1}=\mathrm{dist}(\partial{B}, \partial{D})$ and $h_{2}=\mathrm{dist}(\tau, \partial{B})$. The following lemma on the estimation of the second term at the right hand side of \eqref{eq:Ns_continuous_DCM_ind} holds.

\begin{lem}\label{lem:second_part_bound}
There exists constants $M_{1},M_{2}>0$ such that
\begin{equation}\label{eq:second_part_bound}
    \begin{aligned}
    &\left|k^{2}\mathrm{Im}\int_{\partial{B}}\int_{\partial{B}}\phi(\tau,x_{m})\phi(\tau,x_{j})u^{s}(x_{j},x_{m})\mathrm{d}s(x_{m})\mathrm{d}s(x_{j})\right|\\
    &\leq M_{1}(1+\|T\|)r^{2}_{B}(h_{1}h_{2})^{-1}+M_{2}r^{2}_{B}k^{-1/2}h_{1}^{-3/2}h_{2}^{-1},
    \end{aligned}
\end{equation}
where $M_{1}$ and $M_{2}$ are independent of $k$, $h_{1}$, $h_{2}$, $\|T\|$ and $r_{B}$ but depends on the size of $D$.
\end{lem}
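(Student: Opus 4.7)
My plan is to reduce the left-hand side of \eqref{eq:second_part_bound} to boundary integrals on $\partial D$ via Green's representation, and then bound the resulting oscillatory integrals over $\partial B$ by combining Hankel-function asymptotics with a non-stationary-phase estimate. Concretely, since $u^{s}(\cdot,x_{m})$ solves the exterior Helmholtz problem with sound-soft Dirichlet data $-\phi(\cdot,x_{m})$ on $\partial D$, Green's representation formula gives
\begin{equation*}
u^{s}(x_{j},x_{m})=-\int_{\partial D}\phi(y,x_{m})\frac{\partial\phi(x_{j},y)}{\partial\nu(y)}\,ds(y)-\int_{\partial D}\frac{\partial u^{s}(y,x_{m})}{\partial\nu(y)}\phi(x_{j},y)\,ds(y).
\end{equation*}
Substituting this into the double $\partial B$-integral in \eqref{eq:second_part_bound} and interchanging the order of integration (justified by continuity on the relevant compact sets) reduces the problem to pointwise control, for $y\in\partial D$, of the inner boundary integrals
\begin{equation*}
A(\tau,y):=\int_{\partial B}\phi(\tau,x_{j})\frac{\partial\phi(x_{j},y)}{\partial\nu(y)}\,ds(x_{j}),\qquad B(\tau,y):=\int_{\partial B}\phi(\tau,x_{j})\phi(x_{j},y)\,ds(x_{j}),
\end{equation*}
together with their $x_{m}$-analogues, paired with the Dirichlet and Neumann traces of $u^{s}$ on $\partial D$.

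Next I would estimate $A$ and $B$ using the asymptotic expansion $H_{0}^{(1)}(t)=\sqrt{2/(\pi t)}\,e^{\mathrm{i}(t-\pi/4)}+O(t^{-3/2})$ together with the geometric lower bounds $|\tau-x_{j}|\geq h_{2}$ and $|y-x_{j}|\geq h_{1}$ available on $\partial B$. The crucial observation is that the phase $k(|\tau-x_{j}|+|x_{j}-y|)$ has no stationary point on $\partial B$: since $\tau$ and $y$ both lie strictly inside the circle, the tangential gradient of this phase along $\partial B$ is bounded below by a constant depending only on $h_{1},h_{2}$, and $r_{B}$. One integration by parts along the circle then delivers the decisive $k^{-1/2}$ gain for $B(\tau,y)$, whereas for $A(\tau,y)$ the extra factor $k$ coming from $\partial_{\nu(y)}\phi$ makes the direct amplitude estimate sufficient. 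Explicit bookkeeping, using that $|\partial B|=2\pi r_{B}$, then yields pointwise bounds on $A(\tau,y),B(\tau,y)$ of the form $Cr_{B}(h_{1}h_{2})^{-1/2}$ and $Cr_{B}k^{-1/2}(kh_{1}h_{2})^{-1/2}$, respectively, which propagate through the outer $\partial D$-integrals.

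Finally, I would invoke Lemma \ref{lem:direct_well_posedness} in the form $\|\partial u^{s}(\cdot,x_{m})/\partial\nu\|_{H^{-1/2}(\partial D)}\leq\|T\|\cdot\|\phi(\cdot,x_{m})\|_{H^{1/2}(\partial D)}$ to control the second Green term (which accounts for the factor $\|T\|$), while the first Green term needs only the Dirichlet trace $\phi(\cdot,x_{m})|_{\partial D}$ and contributes the $1$ in $1+\|T\|$. Combining everything via the $H^{1/2}$-$H^{-1/2}$ duality pairing on $\partial D$, multiplying by the prefactor $k^{2}$, and using that $\|\phi(\cdot,x_{m})\|_{H^{1/2}(\partial D)}$ scales like $h_{1}^{-1/2}$ because $x_{m}$ is at distance at least $h_{1}$ from $\partial D$, assembles the first summand $M_{1}(1+\|T\|)r_{B}^{2}(h_{1}h_{2})^{-1}$. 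The $O(t^{-3/2})$ Hankel remainder, combined with the same duality pairing and the integration-by-parts gain applied to the kernel $B$, then produces the smaller second summand $M_{2}r_{B}^{2}k^{-1/2}h_{1}^{-3/2}h_{2}^{-1}$, with constants depending on $|\partial D|$ but not on $k,h_{1},h_{2},r_{B}$, or $\|T\|$.

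The main obstacle will be the quantitative non-stationary-phase argument: one must extract the $k^{-1/2}$ gain through a single integration by parts along $\partial B$ while certifying that the implied constant depends only on $h_{1},h_{2},r_{B}$, rather than on how close the ellipse with foci $\tau,y$ might come to being tangent to $\partial B$ when $\tau$ or $y$ approach $\partial B$. A secondary technical point is keeping the contribution of the Hankel remainder $O(t^{-3/2})$ properly aligned with the $H^{1/2}$-$H^{-1/2}$ duality pairing so that its net effect is the asymmetric factor $h_{1}^{-3/2}h_{2}^{-1}$ rather than a spuriously larger distance weight.
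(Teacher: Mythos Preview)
Your starting point---Green's representation for $u^{s}(x_{j},x_{m})$ together with the boundary condition $u^{s}=-\phi$ on $\partial D$, and the identification of the factor $1+\|T\|$ with the Dirichlet and Neumann trace contributions---matches the paper exactly. From there, however, the paper proceeds far more directly than you propose, and your oscillatory-integral detour contains an actual error.

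The paper simply bounds $|u^{s}(x_{j},x_{m})|$ pointwise by inserting the elementary Hankel estimates $|H_{0}^{(1)}(t)|\leq\sqrt{2/(\pi t)}$ and $|H_{1}^{(1)}(t)|\leq\sqrt{2/(\pi t)}+2/(\pi t)$ into the Green representation, obtaining
\[
|u^{s}(x_{j},x_{m})|\leq \frac{|\partial D|}{8\pi}(1+\|T\|)(kh_{1})^{-1}+\frac{\sqrt{2\pi}\,|\partial D|}{8\pi^{2}}(kh_{1})^{-3/2}.
\]
It then bounds $|\phi(\tau,x_{m})\phi(\tau,x_{j})|$ the same way (giving a factor $(kh_{2})^{-1}$), multiplies by $k^{2}$ and by the area $(2\pi r_{B})^{2}$ of $\partial B\times\partial B$, and reads off \eqref{eq:second_part_bound}. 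The two summands on the right of \eqref{eq:second_part_bound} thus arise purely from the two summands in the $H_{1}^{(1)}$ bound; no phase analysis enters at all.

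Your non-stationary-phase step is in fact incorrect. For any $\tau$ and $y$ strictly inside $\partial B$, the map $x\mapsto|\tau-x|+|x-y|$ is continuous and $2\pi$-periodic on the circle, so it attains a maximum and a minimum there, and its tangential derivative vanishes at those points (geometrically, the specular reflection points where the confocal ellipse through $x$ is tangent to $\partial B$). Hence the phase is \emph{not} globally non-stationary on $\partial B$, and one integration by parts cannot deliver a uniform $k^{-1/2}$ gain with constants depending only on $h_{1},h_{2},r_{B}$. Fortunately no such gain is needed: the crude amplitude bound on your $B(\tau,y)$ already scales like $r_{B}(kh_{1})^{-1/2}(kh_{2})^{-1/2}$, which is exactly what is required. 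If you drop the stationary-phase machinery and use the $H^{1/2}$--$H^{-1/2}$ pairing only through the DtN bound of Lemma~\ref{lem:direct_well_posedness}, your argument collapses to the paper's.
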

\begin{proof}
From \cite{Chen_zhi_ming3, chandler_wilde1}, we obtain
\begin{equation}\label{eq:hankel_estimate}
    \begin{aligned}
    |H_{0}^{(1)}(t)|\leq\sqrt{\frac{2}{\pi t}},\quad\quad |H_{1}^{(1)}(t)|\leq\sqrt{\frac{2}{\pi t}}+\frac{2}{\pi t}, \quad \forall t> 0,
    \end{aligned}
\end{equation}
where $H_{0}^{(1)}$ is the Hankel function of the first kind of order zero and $H_{1}^{(1)}$ is the Hankel function of the first kind of the first order. By the integral representation formula, the scattered field $u^{s}(x_{j},x_{m})$ is given as
\begin{equation}\label{eq:us_integral}
    \begin{aligned}
    u^{s}(x_{j},x_{m})=\int_{\partial D}\left(u^{s}(y,x_{m})\frac{\partial \phi(x_{j},y)}{\partial \nu(y)}-\frac{\partial u^{s}(y,x_{m})}{\partial \nu(y)}\phi(x_{j},y)\right)\mathrm{d}s(y).
    \end{aligned}
\end{equation}
Combining \eqref{eq:hankel_estimate}, \eqref{eq:us_integral} and the fact that $u^{s}(y,x_{m})|_{y\in \partial D}=-\phi(y,x_{m})|_{y\in \partial D}$, we have
\begin{equation}\label{eq:us_integral_bound}
    \begin{aligned}
    |u^{s}(x_{j},x_{m})|&\leq \frac{|\partial D|}{8\pi}(1+\|T\|)(k|x_{m}-y|)^{-1/2}(k|x_{j}-y|)^{-1/2}\\
    &\quad\quad +\frac{\sqrt{2\pi}|\partial D|}{8\pi^{2}}(k|x_{m}-y|)^{-1/2}(k|x_{j}-y|)^{-1}\\
    &\leq \frac{|\partial D|}{8\pi}(1+\|T\|)(kh_{1})^{-1}+\frac{\sqrt{2\pi}|\partial D|}{8\pi^{2}}(kh_{1})^{-3/2}.
    \end{aligned}
\end{equation}
By \eqref{eq:us_integral_bound} and again using \eqref{eq:hankel_estimate}, we have
\begin{equation}
    \begin{aligned}
    &\left|k^{2}\mathrm{Im}\int_{\partial{B}}\int_{\partial{B}}\phi(\tau,x_{m})\phi(\tau,x_{j})u^{s}(x_{j},x_{m})\mathrm{d}s(x_{m})\mathrm{d}s(x_{j})\right|\\
    \leq &\frac{|\partial D|}{16}(1+\|T\|)r^{2}_{B}h_{1}^{-1}|\tau-x_{m}|^{-1/2}|\tau-x_{j}|^{-1/2}\\
    &\quad\quad +\frac{\sqrt{2\pi}|\partial D|}{16\pi}r^{2}_{B}k^{-1/2}h_{1}^{-3/2}|\tau-x_{m}|^{-1/2}|\tau-x_{j}|^{-1/2}.
    \nonumber
    \end{aligned}
\end{equation}
Then, we can derive the estimate \eqref{eq:second_part_bound}. This completes the proof.
\end{proof}

Now, we are ready to present our main results.

\begin{thm}\label{thm:ind_resolution_rem}
For the sampling point $\tau\in\Omega$ that is located inside $\partial B$, let $\psi(x,\tau)$ be the radiation solution of the Helmholtz equation
\begin{equation}\label{eq:helmholtz_psi}
    \begin{aligned}
    \bigtriangleup \psi(x,\tau)+k^{2}\psi(x,\tau)=0\ \ \mbox{in}\  \mathbb{R}^2 \backslash \overline{D},\ \ \psi(x,\tau)=-\mathrm{Im}\phi(x,\tau) \ \ \mbox{on}\  \partial{D}.
    \end{aligned}
\end{equation}
Assume that the position $z_{l}$ of the random incident source is $\xi$-perturbed, where $\xi$ denotes the perturbation value. Then for $0<\xi<1/2$, $I(\tau)$ defined in \eqref{eq:passive_DCM_ind} satisfies that
\begin{equation}\label{eq:active_rtm_continue_psi}
    \begin{aligned}
    I(\tau)=k\int_{S}|\psi^{\infty}(\hat{x},\tau)|^{2}\mathrm{d}\hat{x}+R(\tau),
    \end{aligned}
\end{equation}
where $S:=\left\{\hat{x} \in \mathbb{R}^2:|\hat{x}|=1\right\}$ and $R(\tau)$ is the remainder term that satisfies
\begin{equation}\label{eq:R_bound}
    \begin{aligned}
    |R(\tau)|\leq M_{1}(1+\|T\|)r^{2}_{B}(h_{1}h_{2})^{-1}+M_{2}r^{2}_{B}k^{-1/2}h_{1}^{-3/2}h_{2}^{-1} + \frac{M_{3}}{L^{2-4\xi}}+\frac{M_{4}}{J} + M_{5}r_{B}^{-1},
    \end{aligned}
\end{equation}
where $L$ is the number of incident sources, $J$ is the number of measurement points, $M_{1},M_{2},M_{3},M_{4},M_{5}$ are all positive constants and $M_{1},M_{2}$ are defined in \eqref{eq:second_part_bound}.
\end{thm}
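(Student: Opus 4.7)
The plan is to split $I(\tau)$ into a main piece plus three error contributions that match the three groups of summands in \eqref{eq:R_bound}. More precisely, I would write
\begin{equation*}
I(\tau) = \hat{I}_{N^s}(\tau) + \bigl[I_{N^s}(\tau) - \hat{I}_{N^s}(\tau)\bigr] + \bigl[I(\tau) - I_{N^s}(\tau)\bigr],
\end{equation*}
and then analyze each of the three pieces on the right.

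First, for the continuous term $\hat{I}_{N^s}(\tau)$ I would exploit its two-part decomposition already given in \eqref{eq:Ns_continuous_DCM_ind}. The first double integral there is precisely the continuous reverse time migration indicator applied to the active scattered field $u^s(x_j,x_m)$; by the RTM analysis carried out in \cite{Chen_zhi_ming2}, quoting that result expresses it as $k\int_{S}|\psi^{\infty}(\hat{x},\tau)|^{2}\mathrm{d}\hat{x}$ up to a remainder of order $M_{5}r_{B}^{-1}$, where $\psi$ is the radiation solution of \eqref{eq:helmholtz_psi}. The second double integral in \eqref{eq:Ns_continuous_DCM_ind} is handled directly by Lemma \ref{lem:second_part_bound}, which contributes the $M_{1}(1+\|T\|)r^{2}_{B}(h_{1}h_{2})^{-1}+M_{2}r^{2}_{B}k^{-1/2}h_{1}^{-3/2}h_{2}^{-1}$ part of $R(\tau)$.

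Next, I would estimate the quadrature error $I_{N^s}(\tau)-\hat{I}_{N^s}(\tau)$. Since $\tau$ lies strictly inside $\partial B$, both $\phi(\tau,x_{j})$ and $\phi(\tau,x_{m})$ are smooth in their receiver arguments along $\partial B$, and the same is true for $N^{s}_{jm}=u^s(x_j,x_m)-\overline{u^s(x_j,x_m)}$. Consequently the integrand $\phi(\tau,x_{m})\phi(\tau,x_{j})\overline{N^{s}_{jm}}$ is smooth on $\partial B\times\partial B$, so a standard quadrature estimate for the equidistributed sampling of the $J$ measurement points on the circle $\partial B$ gives an error of order $M_{4}/J$.

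The main obstacle is the stochastic term $I(\tau)-I_{N^{s}}(\tau)$, which originates from \eqref{eq:app_c_scattered}: in the definition \eqref{eq:C} of $C_{jm}$, the Helmholtz-Kirchhoff integral in \eqref{eq:HK_total} is replaced by its empirical mean over the $\xi$-perturbed random source positions $z_{l}$. The plan is to write $C_{jm}-N^{s}_{jm}$ as a centered Monte Carlo sum plus a bias term accounting for the perturbation of amplitude $\xi$; I would compute the variance of this sum using the explicit pointwise bounds \eqref{eq:hankel_estimate} on the Hankel functions, and then, via a Chebyshev-type argument applied to the outer double sum weighted by $\phi(\tau,x_{m})\phi(\tau,x_{j})$, convert this into a deterministic bound of order $M_{3}/L^{2-4\xi}$ on $|I(\tau)-I_{N^{s}}(\tau)|$. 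The delicate technical step here is tracking how the perturbation parameter $\xi\in(0,1/2)$ propagates through the product of Hankel functions and survives after taking the imaginary part; this is precisely what dictates the exponent $2-4\xi$ and why the constraint $\xi<1/2$ is needed. Once these three estimates are combined, the bound \eqref{eq:R_bound} follows immediately, completing the proof.
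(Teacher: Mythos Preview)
Your decomposition
\[
I(\tau) = \hat{I}_{N^s}(\tau) + \bigl[I_{N^s}(\tau) - \hat{I}_{N^s}(\tau)\bigr] + \bigl[I(\tau) - I_{N^s}(\tau)\bigr]
\]
is exactly the one the paper uses, and your handling of the first two pieces (RTM from \cite{Chen_zhi_ming2} giving the $M_{5}r_{B}^{-1}$ term, Lemma~\ref{lem:second_part_bound} for the second integral in \eqref{eq:Ns_continuous_DCM_ind}, and the $O(1/J)$ trapezoidal quadrature error on $\partial B$) matches the paper's proof line for line.

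The gap is in your treatment of $I(\tau)-I_{N^s}(\tau)$. You propose to view $C_{jm}-N^{s}_{jm}$ as a centered Monte Carlo sum, compute its variance, and invoke a Chebyshev-type inequality. That mechanism cannot produce the bound you need, for two reasons. First, Chebyshev yields only a high-probability estimate, whereas \eqref{eq:R_bound} is stated as a deterministic inequality. Second, and more fundamentally, a variance calculation for $L$ samples gives rates of order $L^{-1/2}$ (or $L^{-1}$ for the squared error), never $L^{-(2-4\xi)}$; the latter exponent cannot arise from any moment or concentration argument based on independence alone. The exponent $2-4\xi$ has nothing to do with Hankel functions or the imaginary part.

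The point you are missing is that the source positions $z_{l}$ are \emph{not} i.i.d.\ on $\Sigma$: they are $\xi$-perturbations of an equispaced grid, $\theta_{l}=\frac{2\pi}{L}(l-1+\xi_{l})$ with $\xi_{l}\in[0,\xi]$. The paper treats \eqref{eq:C} as a perturbed trapezoidal rule for the smooth periodic integrand $z\mapsto \overline{u(x_{j},z)}u(x_{m},z)$ on $\Sigma$ and invokes the deterministic quadrature result of Austin--Trefethen \cite{Austin_Trefethen1}: for a $C^{2}$ integrand and nodes displaced by at most a fraction $\xi<1/2$ of the grid spacing, the quadrature error is $O(L^{-2(1-2\xi)})=O(L^{-(2-4\xi)})$. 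This is where both the exponent and the restriction $\xi<1/2$ come from. Replacing your Monte Carlo step by this quadrature citation closes the gap, and the rest of your argument goes through unchanged.
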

\begin{proof}
For $0<\xi<1/2$, by Theorem 1 in \cite{Austin_Trefethen1} and the fact that the scattered field $u^{s}$ is twice continuously differentiable, we have
\begin{equation}\label{eq:Ns_C_error_bound}
    \begin{aligned}
    |N^{s}_{jm}-C_{jm}|=O(\frac{1}{L^{2-4\xi}}).
    \end{aligned}
\end{equation}
Moreover, since \eqref{eq:Ns_discrete_DCM_ind} is the trapezoid quadrature approximation of \eqref{eq:Ns_continuous_DCM_ind}, and $\phi(\tau,x_{j})$, $\phi(\tau,x_{m})$, $N^{s}_{jm}$ are all smooth functions, it follows that
\begin{equation}\label{eq:Ind_Ns_error_bound}
    \begin{aligned}
    &|I_{N^{s}}(\tau)-\hat{I}_{N^{s}}(\tau)|\\
    &=|\frac{1}{2}\frac{(2\pi)^{3}}{J}\frac{\mathrm{d}(\phi(\tau,x)\phi(\tau,y)\overline{u^{s}(x,y)-\overline{u^{s}(x,y)}})}{\mathrm{d}x}|_{(x,y)=(\zeta_{1},\zeta_{2})}\\
    &\quad\quad\quad+\frac{1}{2}\frac{(2\pi)^{3}}{J}\frac{\mathrm{d}(\phi(\tau,x)\phi(\tau,y)\overline{u^{s}(x,y)-\overline{u^{s}(x,y)}})}{\mathrm{d}y}|_{(x,y)=(\zeta_{3},\zeta_{4})}|\\
    &=O(\frac{1}{J}).
    \end{aligned}
\end{equation}

Then, combining Theorem 3.3 of \cite{Chen_zhi_ming2}, \eqref{eq:Ns_C_error_bound}, \eqref{eq:Ind_Ns_error_bound}, Lemma \ref{lem:second_part_bound}, we obtain
\begin{equation}
    \begin{aligned}
    I(\tau)=k\int_{S}|\psi^{\infty}(\hat{x},\tau)|^{2}\mathrm{d}\hat{x}+R(\tau),
    \nonumber
    \end{aligned}
\end{equation}
and $R(\tau)$ satisfies \eqref{eq:R_bound}. The proof is completed.
\end{proof}

\begin{rem}\label{rem_resolution}
Theorem \ref{thm:ind_resolution_rem} indicates that the indicator functional \eqref{eq:passive_DCM_ind} has the contrast at the boundary $\partial D$ and decays when the sampling point $\tau$ goes away from the obstacle $D$. Additionally, we would like to point out that, Theorem \ref{thm:ind_resolution_rem} holds for $0<\xi<1/2$ primarily due to \eqref{eq:Ns_C_error_bound}. Nonetheless, numerical experiments presented in Section 4 show that our proposed approach also works well for the case of $\xi\geq 1/2$.
\end{rem}

Let $\widetilde{h}_{1}=\mathrm{dist}(\partial{D},\Sigma)$ and $\widetilde{h}_{2}=\mathrm{dist}(\partial{B},\Sigma)$ respectively be distances of $\partial D$ to $\Sigma$ and $\partial B$ to $\Sigma$. The following stability result holds.

\begin{thm}\label{thm:stability_rem}
There exist positive constants $M_{6}$ and $M_{7}$ that are independent of the sampling point $\tau$ such that
\begin{equation}\label{eq:stability_result}
    \begin{aligned}
    \left|I(\tau)-I^{\delta}(\tau)\right|
    \leq M_{6}\|u^{\delta}(x,z)-u(x,z)\|^{2}_{L^{\infty}(\partial B \times \Sigma)} + M_{7}\|u^{\delta}(x,z)-u(x,z)\|_{L^{\infty}(\partial B \times \Sigma)},
    \end{aligned}
\end{equation}
where $I^{\delta}(\tau)$ is the indicator functional \eqref{eq:passive_DCM_ind} with $u(x_{j},z_{l})$ and $u(x_{m},z_{l})$ replaced by $u^{\delta}(x_{j},z_{l})$ and $u^{\delta}(x_{m},z_{l})$.
\end{thm}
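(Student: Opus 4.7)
The approach is to expand $I(\tau)-I^{\delta}(\tau)$ directly from \eqref{eq:passive_DCM_ind} and \eqref{eq:C}, and then split the resulting difference into terms that are linear and quadratic in the measurement error. Since the piece $\phi(x_{j},x_{m})-\overline{\phi(x_{j},x_{m})}$ of $C_{jm}$ does not involve any measured data, it cancels when forming the difference of the two indicator functionals. What remains is a triple sum over $j,m,l$ of
\begin{equation*}
\phi(\tau,x_{m})\phi(\tau,x_{j})\Big[u(x_{j},z_{l})\overline{u(x_{m},z_{l})}-u^{\delta}(x_{j},z_{l})\overline{u^{\delta}(x_{m},z_{l})}\Big],
\end{equation*}
multiplied by an explicit scalar prefactor of order $k^{3}r_{B}^{2}|\Sigma|/(J^{2}L)$. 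I would then rewrite the bracketed term via the identity $a\bar{b}-a'\bar{b'}=(a-a')\bar{b}+a'(\bar{b}-\bar{b'})$ and further expand $a'=a+(a'-a)$ inside the second summand, so that the bracket is decomposed as two terms linear in $u-u^{\delta}$ (each paired with an unperturbed $u$-factor) plus one term quadratic in $u-u^{\delta}$.

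The resulting three contributions are bounded by the triangle inequality. The fundamental solutions $\phi(\tau,x_{m})$ and $\phi(\tau,x_{j})$ are controlled uniformly in $\tau\in\Omega$ and in $x_{j},x_{m}\in\partial B$ via the Hankel estimate \eqref{eq:hankel_estimate}, since $\tau$ stays a positive distance from $\partial B$. The exact total field $u(x,z)$ is uniformly bounded on $\partial B\times\Sigma$ by a constant depending only on $k$, the obstacle $D$, and the geometric quantities $\widetilde{h}_{1},\widetilde{h}_{2}$, using Lemma \ref{lem:direct_well_posedness} together with the fact that $\Sigma$ is separated from $\overline{D}$. Bounding every perturbation factor by $\|u^{\delta}-u\|_{L^{\infty}(\partial B\times\Sigma)}$, the discrete averages $\frac{|\Sigma|}{L}\sum_{l=1}^{L}$ and $\frac{1}{J^{2}}\sum_{j,m=1}^{J}$ each contribute only a constant factor. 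Collecting everything produces the quadratic bound $M_{6}\|u^{\delta}-u\|_{L^{\infty}}^{2}$ from the purely quadratic piece and the linear bound $M_{7}\|u^{\delta}-u\|_{L^{\infty}}$ from the two linear pieces, which is exactly \eqref{eq:stability_result}.

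The main technical obstacle is the careful bookkeeping required to verify that the constants $M_{6}$ and $M_{7}$ are independent of the sampling point $\tau$. This reduces to the fact that $|\phi(\tau,x_{j})|$ admits a $\tau$-uniform upper bound throughout the sampling domain $\Omega$, which is automatic as soon as $\Omega$ lies strictly inside $\partial B$. A secondary point is to check that the prefactors $1/L$ and $1/J^{2}$ exactly compensate the $L$- and $J^{2}$-fold summations, so that no hidden growth in the number of incident sources or receivers creeps into the final constants; this is immediate after the uniform $\phi$- and $u$-bounds are pulled outside the sums. Once these book-keeping points are confirmed, assembling the pieces yields \eqref{eq:stability_result}.
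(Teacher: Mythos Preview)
Your proposal is correct and follows essentially the same route as the paper's own proof: cancel the $\phi(x_{j},x_{m})-\overline{\phi(x_{j},x_{m})}$ term, decompose the bilinear product $\overline{u^{\delta}}u^{\delta}-\overline{u}\,u$ into one quadratic and two linear pieces in $u^{\delta}-u$, bound $\phi(\tau,x_{j})$ uniformly in $\tau$ via \eqref{eq:hankel_estimate}, and bound $|u(x,z)|$ uniformly on $\partial B\times\Sigma$. The only cosmetic difference is that the paper obtains the uniform bound on $u$ explicitly from the integral representation \eqref{eq:us_integral} and the Hankel estimates (producing the constant $M$ depending on $|\partial D|,\|T\|,k,\widetilde{h}_{1},\widetilde{h}_{2},h_{1}$), whereas you invoke Lemma~\ref{lem:direct_well_posedness} more abstractly; the content is the same.
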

\begin{proof}
Using \eqref{eq:hankel_estimate} and the integral representation formula, we have
\begin{equation}
    \begin{aligned}
    |u^{s}(x_{m},z_{l})|\leq \frac{|\partial D|}{8\pi}(1+\|T\|)(k\widetilde{h}_{1})^{-1/2}(kh_{1})^{-1/2}+\frac{\sqrt{2\pi}|\partial D|}{8\pi^{2}}(k\widetilde{h}_{1})^{-1/2}(kh_{1})^{-1}.
    \nonumber
    \end{aligned}
\end{equation}
and
\begin{equation}
    \begin{aligned}
    |u^{s}(x_{j},z_{l})|\leq \frac{|\partial D|}{8\pi}(1+\|T\|)(k\widetilde{h}_{1})^{-1/2}(kh_{1})^{-1/2}+\frac{\sqrt{2\pi}|\partial D|}{8\pi^{2}}(k\widetilde{h}_{1})^{-1/2}(kh_{1})^{-1}.
    \nonumber
    \end{aligned}
\end{equation}
Again using \eqref{eq:hankel_estimate}, one can further obtain
\begin{equation}
    \begin{aligned}
    &\left|u(x_{m},z_{l})\right|=\left|u^{s}(x_{m},z_{l})+u^{i}(x_{m},z_{l})\right|\\
    \leq &\frac{|\partial D|}{8\pi}(1+\|T\|)(k\widetilde{h}_{1})^{-1/2}(kh_{1})^{-1/2}+\frac{\sqrt{2\pi}|\partial D|}{8\pi^{2}}(k\widetilde{h}_{1})^{-1/2}(kh_{1})^{-1} + \frac{\sqrt{2\pi}}{4\pi}(k\widetilde{h}_{2})^{-1/2}.
    \nonumber
    \end{aligned}
\end{equation}
Analogously,
\begin{equation}
    \begin{aligned}
    \left|u(x_{j},z_{l})\right|\leq \frac{|\partial D|}{8\pi}(1+\|T\|)(k\widetilde{h}_{1})^{-1/2}(kh_{1})^{-1/2}+\frac{\sqrt{2\pi}|\partial D|}{8\pi^{2}}(k\widetilde{h}_{1})^{-1/2}(kh_{1})^{-1} + \frac{\sqrt{2\pi}}{4\pi}(k\widetilde{h}_{2})^{-1/2}.
    \nonumber
    \end{aligned}
\end{equation}
Define $M=\frac{|\partial D|}{8\pi}(1+\|T\|)(k\widetilde{h}_{1})^{-1/2}(kh_{1})^{-1/2}+\frac{\sqrt{2\pi}|\partial D|}{8\pi^{2}}(k\widetilde{h}_{1})^{-1/2}(kh_{1})^{-1} + \frac{\sqrt{2\pi}}{4\pi}(k\widetilde{h}_{2})^{-1/2}$. Therefore, it holds that
\begin{equation}\label{eq:u_conj_minus_u}
    \begin{aligned}
    &\left|\overline{u^{\delta}(x_{j},z_{l})}u^{\delta}(x_{m},z_{l})-\overline{u(x_{j},z_{l})}u(x_{m},z_{l})\right|\\
    &=|(u^{\delta}(x_{m},z_{l})-u(x_{m},z_{l}))\overline{u^{\delta}(x_{j},z_{l})-u(x_{j},z_{l})}\\
    &\quad\quad\quad +u(x_{m},z_{l})\overline{u^{\delta}(x_{j},z_{l})-u(x_{j},z_{l})} + \overline{u(x_{j},z_{l})}(u^{\delta}(x_{m},z_{l})-u(x_{m},z_{l}))|\\
    &\leq \|u^{\delta}(x,z)-u(x,z)\|^{2}_{L^{\infty}(\partial B \times \Sigma)} + 2M\|u^{\delta}(x,z)-u(x,z)\|_{L^{\infty}(\partial B \times \Sigma)}.
    \end{aligned}
\end{equation}
Then, by \eqref{eq:u_conj_minus_u}, we have
\begin{equation}
    \begin{aligned}
    \left|I(\tau)-I^{\delta}(\tau)\right|
    \leq \frac{\pi k^{2}r_{B}^{2}|\Sigma|}{h_{2}}(\|u^{\delta}(x,z)-u(x,z)\|^{2}_{L^{\infty}(\partial B \times \Sigma)} + 2M\|u^{\delta}(x,z)-u(x,z)\|_{L^{\infty}(\partial B \times \Sigma)}),
    \nonumber
    \end{aligned}
\end{equation}
which gives rise to \eqref{eq:stability_result}. This completes the proof.
\end{proof}

\section{Numerical experiments}
In this section, we shall conduct some numerical examples for the passive inverse scattering to demonstrate the effectiveness of our proposed DCM.  In all the following examples, the simulated noisy  total field $u^{\delta}(x,z_{l})$ is generated by the following formula:
\begin{equation}
\begin{aligned}
u^{\delta}(x,z_{l})=u(x,z_{l})(1+\delta \Delta),
\nonumber
\end{aligned}
\end{equation}
where $\delta$ is the noise level and $\Delta$ is a random number drawn from the normal distribution $\mathcal{N}(0,1)$. Here, the total field $u(x,z_{l})$ is solved by the boundary integral equation method in the form of the double-layer potential. Accordingly, $C_{jm}^{\delta}$ is the designed cross-correlation \eqref{eq:C} with $u(x,z_{l})$ replaced by $u^{\delta}(x,z_{l})$. The curve $\Sigma$ is selected as a circle of radius 100 centered at origin. Incident sources are randomly distributed on $\Sigma$ and their positions are
\begin{align*}
    z_{l} = 100(\cos \theta_{l}, \sin \theta_{l})^{T}, \quad\quad \theta_{l} = \frac{2\pi}{L}(l-1+\xi_{l}), 1\leq l\leq L,
\end{align*}
where $\xi_{l}$ is a random value drawn from $U[0,\xi]$. The radius of the measurement curve $\partial B$ that encloses the obstacle $D$ is chosen as $r_{B}=5$ and measurement point $x_{j}$ has the form
\begin{align*}
    x_{j} = 5(\cos \gamma_{j}, \sin \gamma_{j})^{T}, \quad\quad \gamma_{j} = \frac{2\pi}{J}(j-1), 1\leq j\leq J.
\end{align*}
For the sake of convenience , we set $L=J$. Furthermore, the search domain $\Omega$ is $[-5,5]\times[-5,5]$ with $200 \times 200$ equally spaced sampling points.

To demonstrate the superiority of our proposed DCM, we consider the following two cases. The first case is the single obstacle, which is used to discuss the effects of the perturbation $\xi$, the number $L$, the wavenumber $k$, and the noise level $\delta$ on the reconstruction. This case allows us to compare our results to those in \cite{GHM1}. The second case is multiple obstacles, which is used to test our method's ability to handle the union of two obstacles that are close together and the multiscale case.

\subsection{Example 1: Single obstacle case}

\begin{figure}[t]
	\centering
	\subfigure[$\mathrm{Im}(C_{jm}),\xi=0.4$]{
		\includegraphics[width=0.3\textwidth]{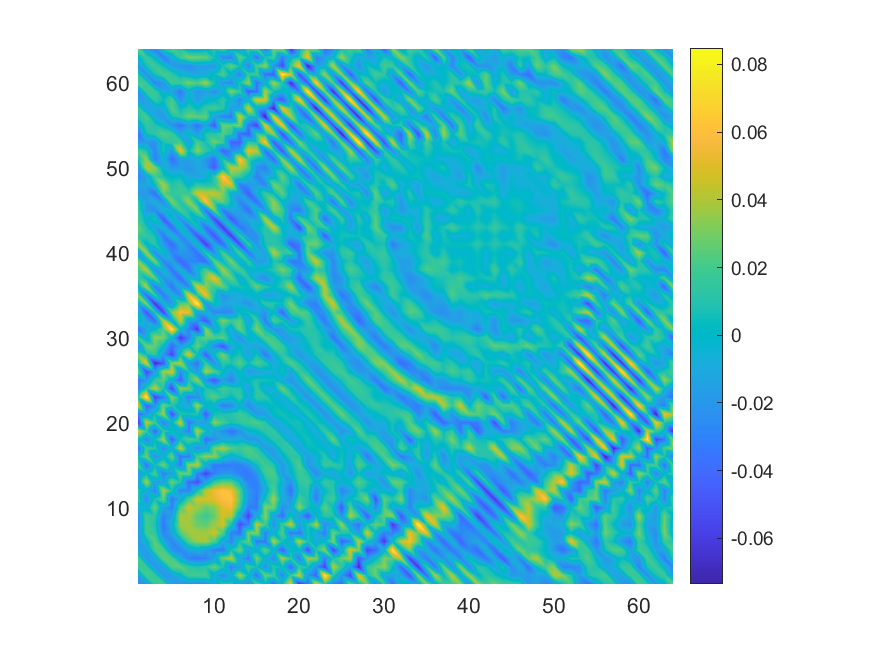}
	}
	\subfigure[$\mathrm{Im}(C_{jm}),\xi=2$]{
		\includegraphics[width=0.3\textwidth]{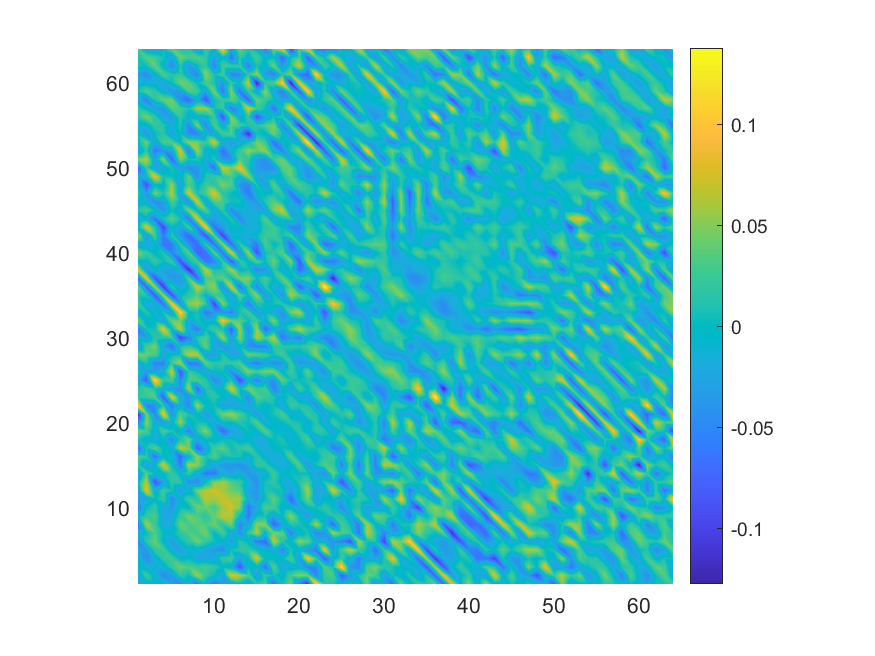}
	}
	\subfigure[$\mathrm{Im}(N_{jm}^{s})$]{
		\includegraphics[width=0.3\textwidth]{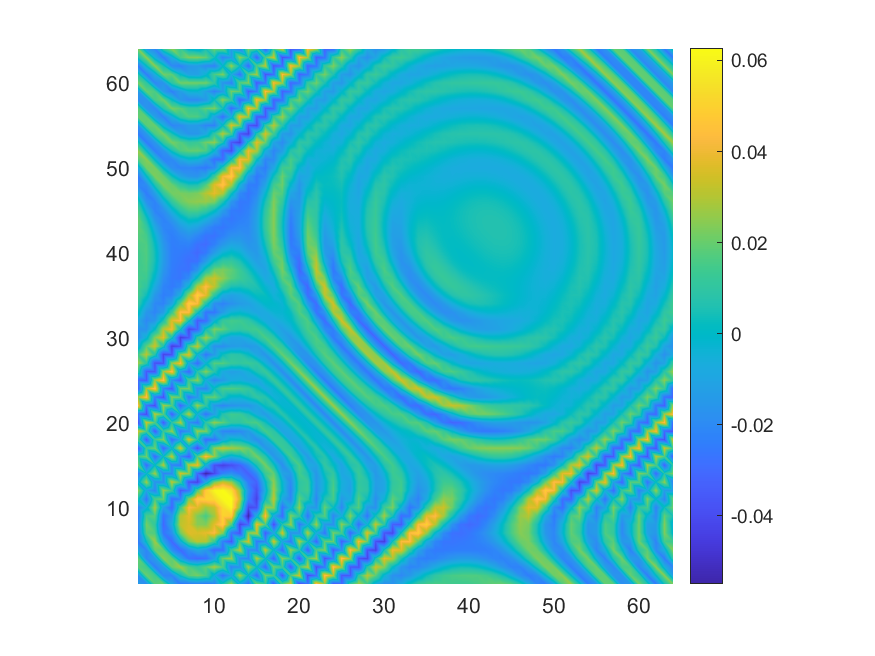}
	}\\
	\subfigure[$\mathrm{Im}(C_{jm}),\xi=0.4$]{
		\includegraphics[width=0.3\textwidth]{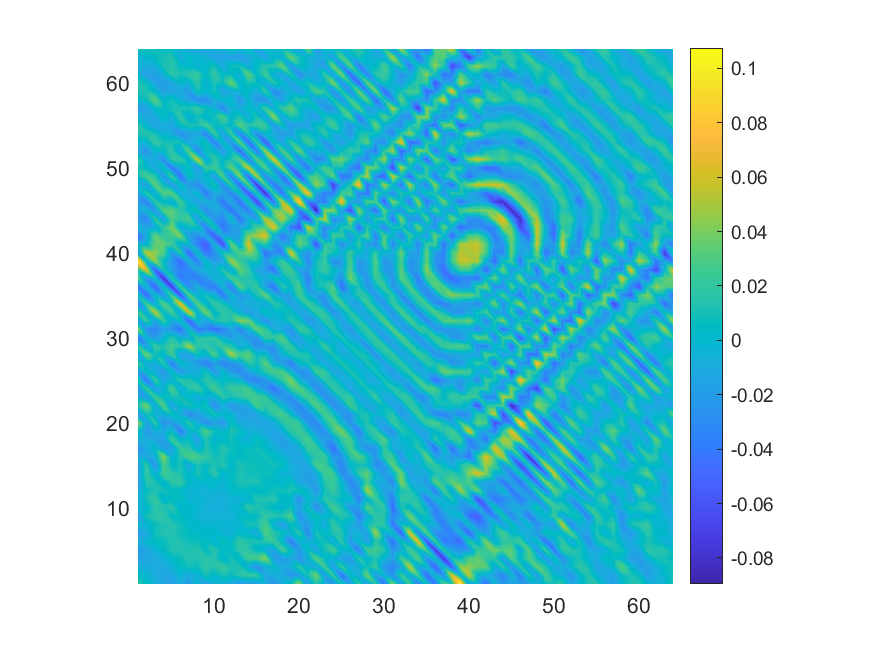}
	}
	\subfigure[$\mathrm{Im}(C_{jm}),\xi=2$]{
		\includegraphics[width=0.3\textwidth]{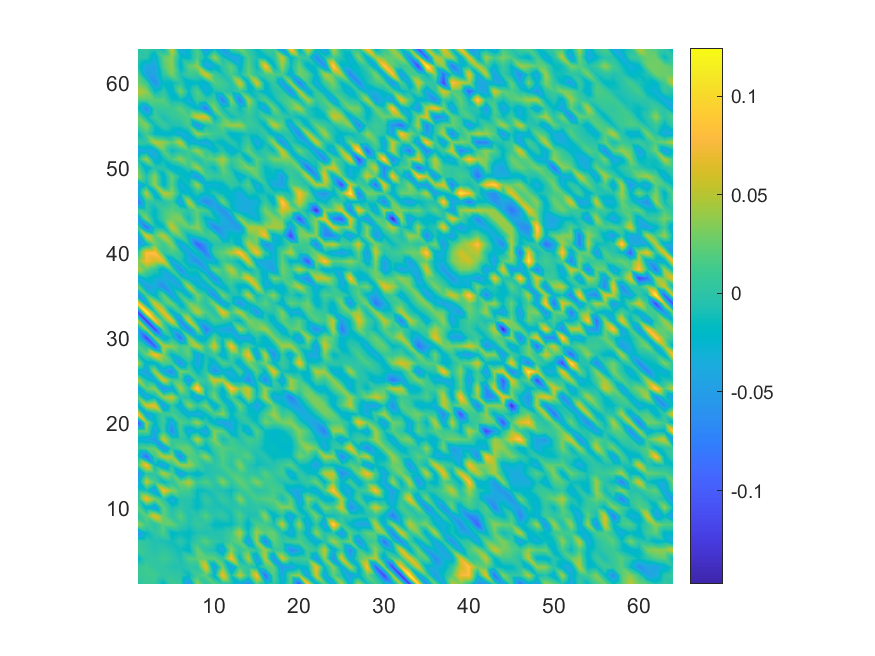}
	}
	\subfigure[$\mathrm{Im}(N_{jm}^{s})$]{
		\includegraphics[width=0.3\textwidth]{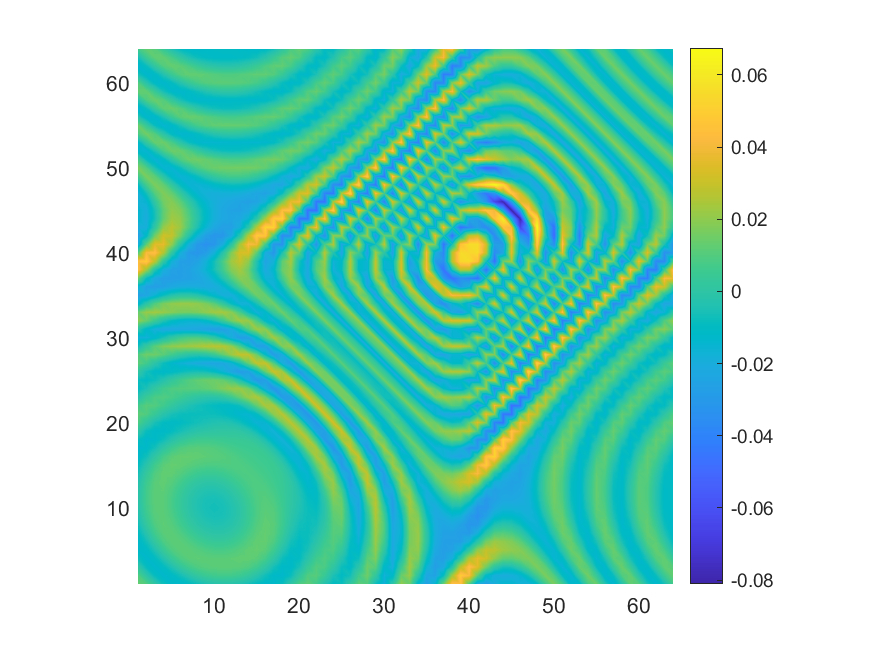}
	}
	\caption{\label{fig:ex1_Cjm_L64}The imaginary parts of $C_{jm}$ and $N^{s}_{jm}$ with $\xi=0.4$ and $\xi=2$ in the case of $L=64$ for the kite (Top) and the peanut (Bottom). The first column displays the imaginary parts of $C_{jm}$ with $\xi=0.4$, the second column displays the imaginary parts of $C_{jm}$ with $\xi=2$ and the third column displays the imaginary parts of $N^{s}_{jm}$.}
\end{figure}

\begin{figure}[h]
	\centering
	\subfigure[Exact shape]{
		\includegraphics[width=0.3\textwidth]{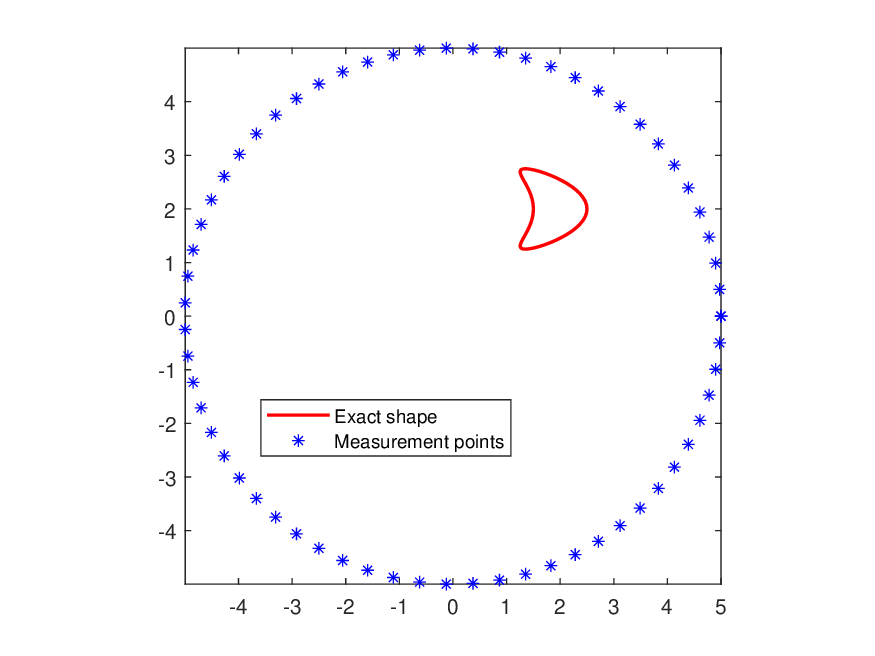}
	}
	\subfigure[$\xi=0.4$]{
		\includegraphics[width=0.3\textwidth]{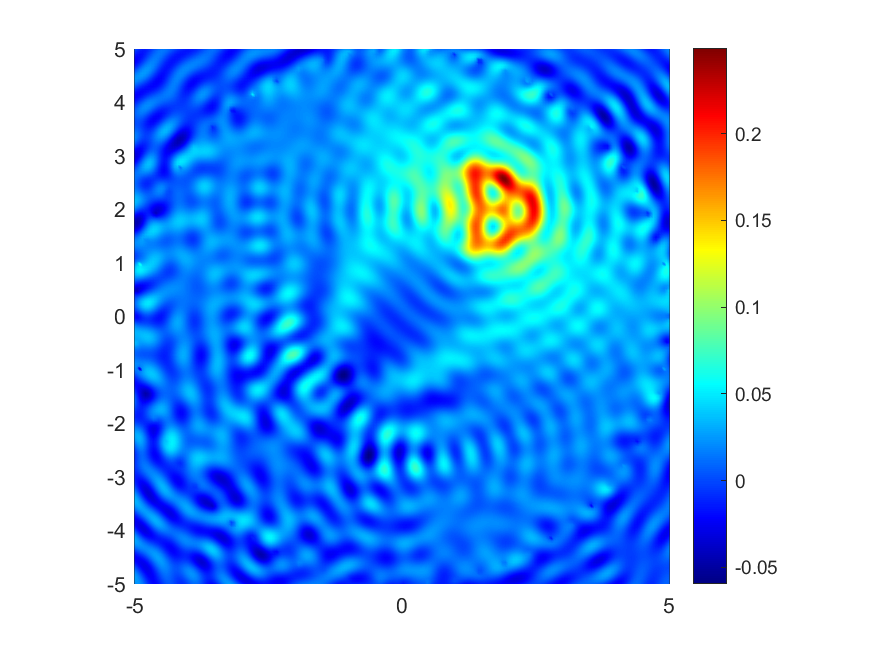}
	}
	\subfigure[$\xi=2$]{
		\includegraphics[width=0.3\textwidth]{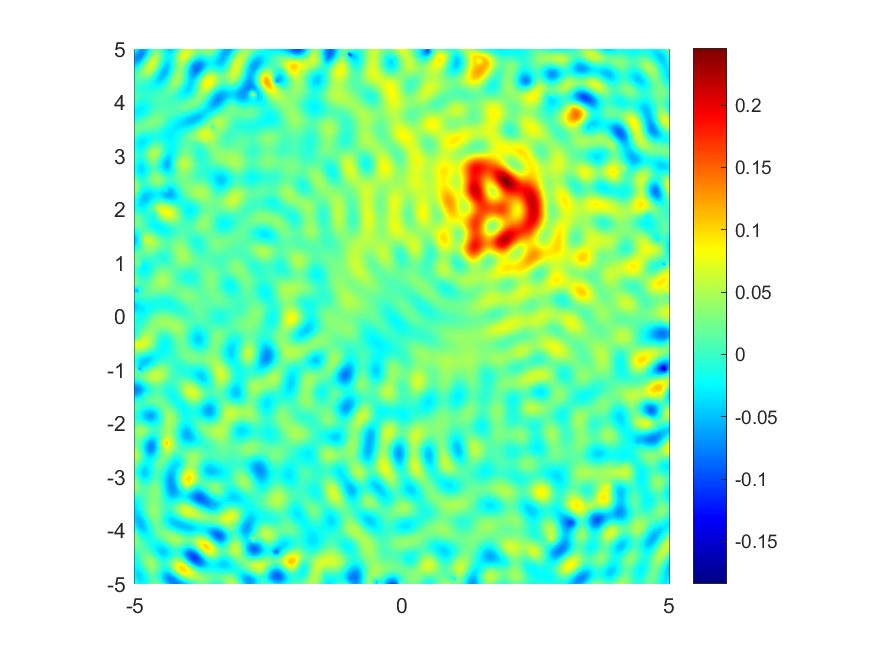}
	}\\
	\subfigure[Exact shape]{
		\includegraphics[width=0.3\textwidth]{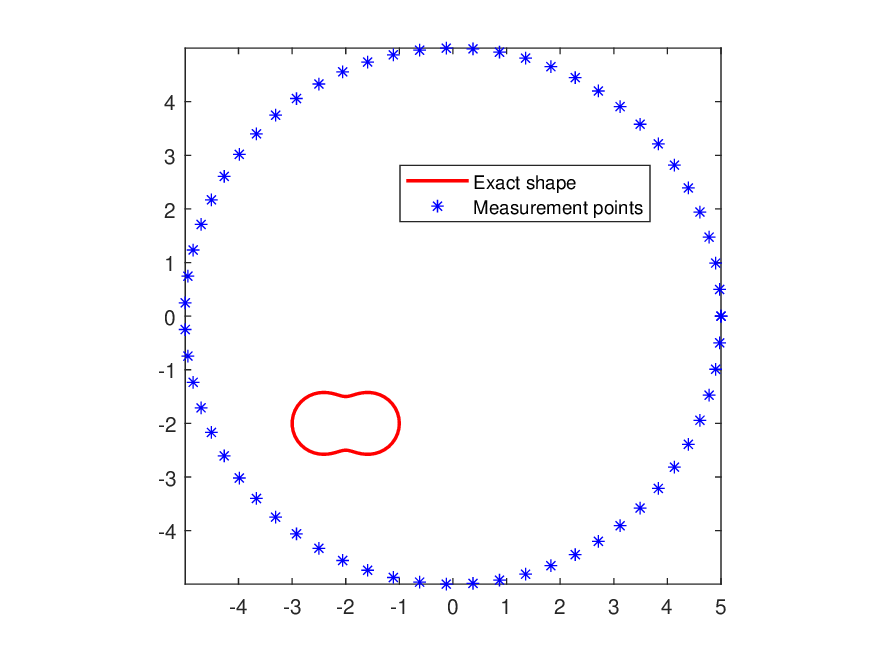}
	}
	\subfigure[$\xi=0.4$]{
		\includegraphics[width=0.3\textwidth]{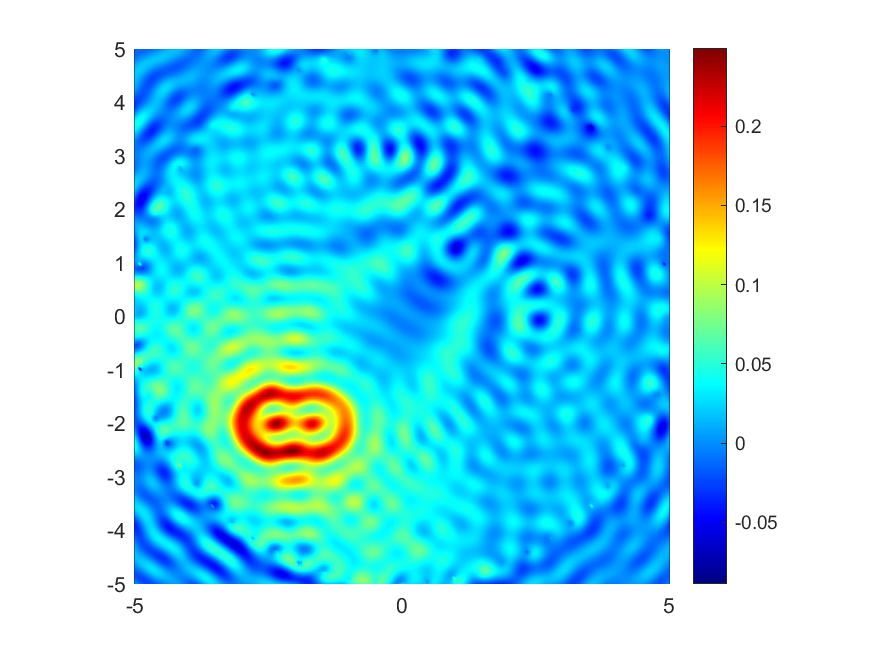}
	}
	\subfigure[$\xi=2$]{
		\includegraphics[width=0.3\textwidth]{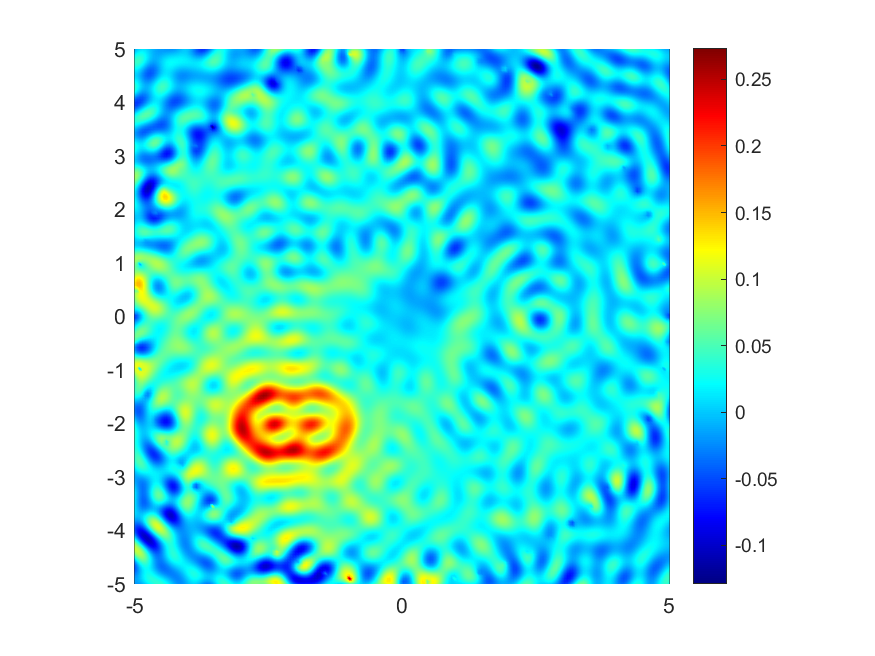}
	}
    \caption{\label{fig:ex1_curve_L64}Recoveries of $\partial D$ by DCM with $\xi=0.4$ and $\xi=2$ in the case of $L=64$ for the kite (Top) and the peanut (Bottom). The first column shows the exact boundary $\partial D$ and $L=64$ measurement points, the second column shows the reconstructed $\partial D$ with $\xi=0.4$ and the third column shows the reconstructed $\partial D$ with $\xi=2$.}
\end{figure}

\begin{figure}[h]
	\centering
	\subfigure[$\mathrm{Im}(C_{jm}),\xi=0.4$]{
		\includegraphics[width=0.3\textwidth]{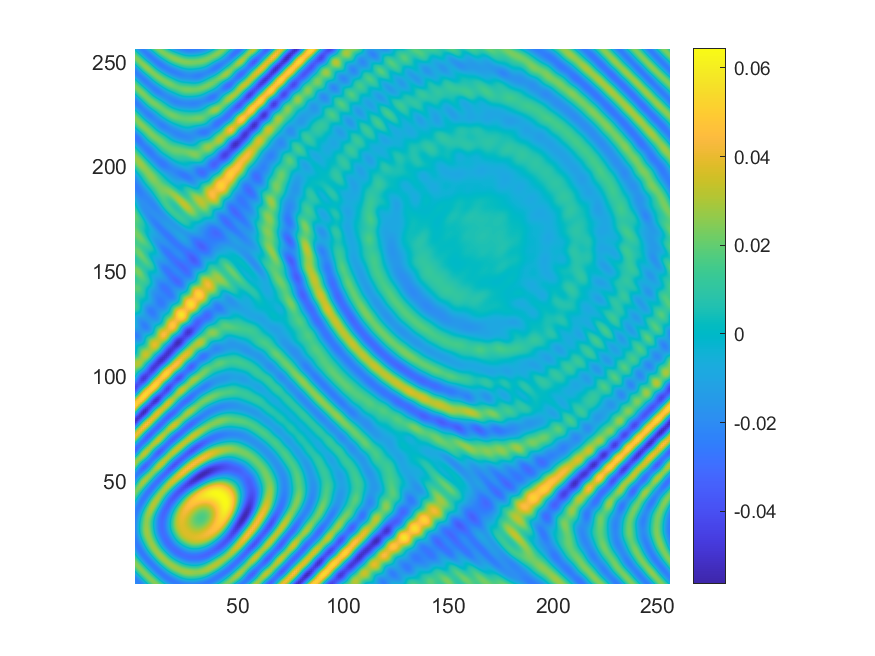}
	}
	\subfigure[$\mathrm{Im}(C_{jm}),\xi=2$]{
		\includegraphics[width=0.3\textwidth]{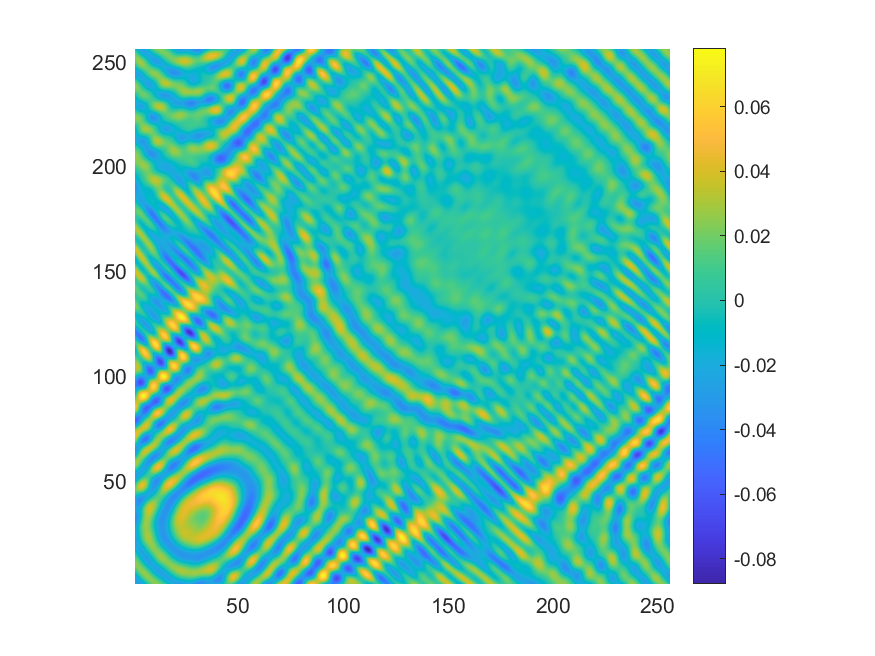}
	}
	\subfigure[$\mathrm{Im}(N_{jm}^{s})$]{
		\includegraphics[width=0.3\textwidth]{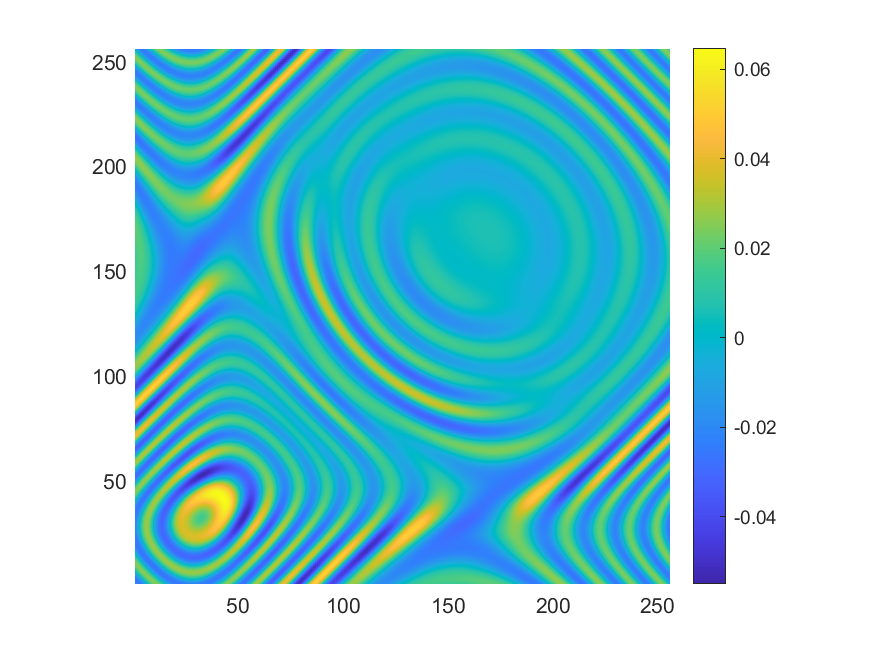}
	}\\
	\subfigure[$\mathrm{Im}(C_{jm}),\xi=0.4$]{
		\includegraphics[width=0.3\textwidth]{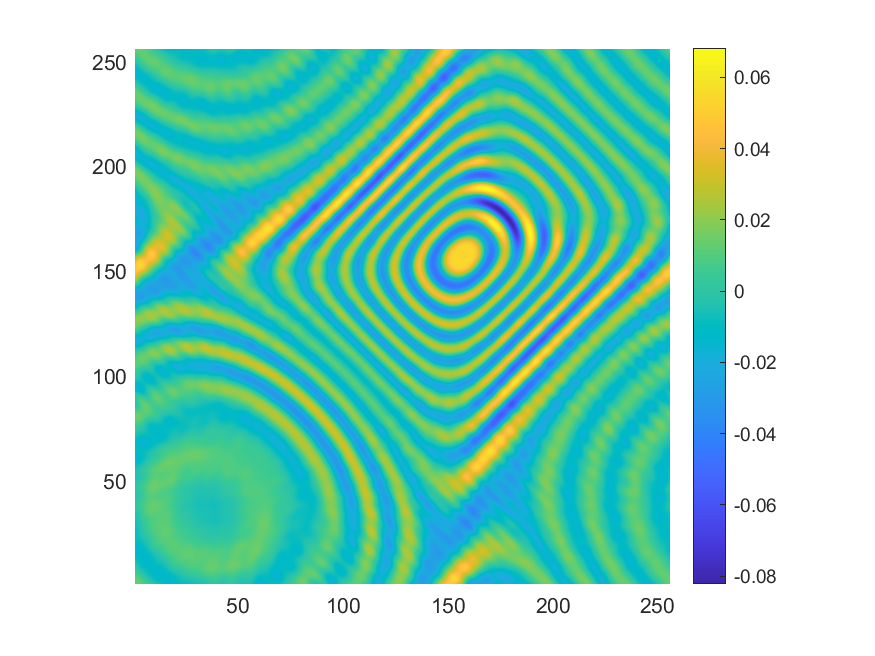}
	}
	\subfigure[$\mathrm{Im}(C_{jm}),\xi=2$]{
		\includegraphics[width=0.3\textwidth]{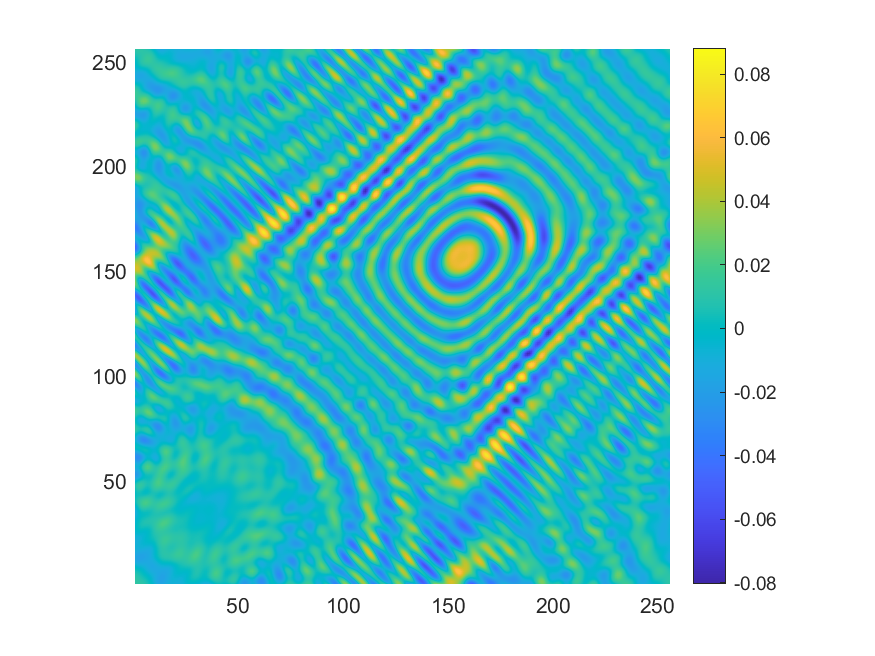}
	}
	\subfigure[$\mathrm{Im}(N_{jm}^{s})$]{
		\includegraphics[width=0.3\textwidth]{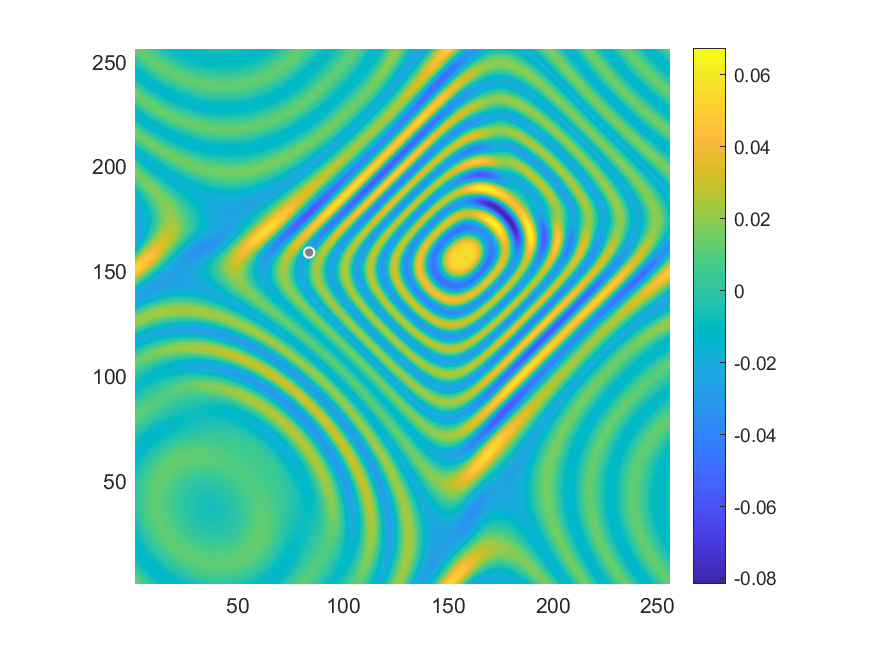}
	}
	\caption{\label{fig:ex1_Cjm_L256}The imaginary parts of $C_{jm}$ and $N^{s}_{jm}$ with $\xi=0.4$ and $\xi=2$ in the case of $L=256$ for the kite (Top) and the peanut (Bottom). The first column displays the imaginary parts of $C_{jm}$ with $\xi=0.4$, the second column displays the imaginary parts of $C_{jm}$ with $\xi=2$ and the third column displays the imaginary parts of $N^{s}_{jm}$.}
\end{figure}

\begin{figure}[h]
	\centering
	\subfigure[Exact shape]{
		\includegraphics[width=0.3\textwidth]{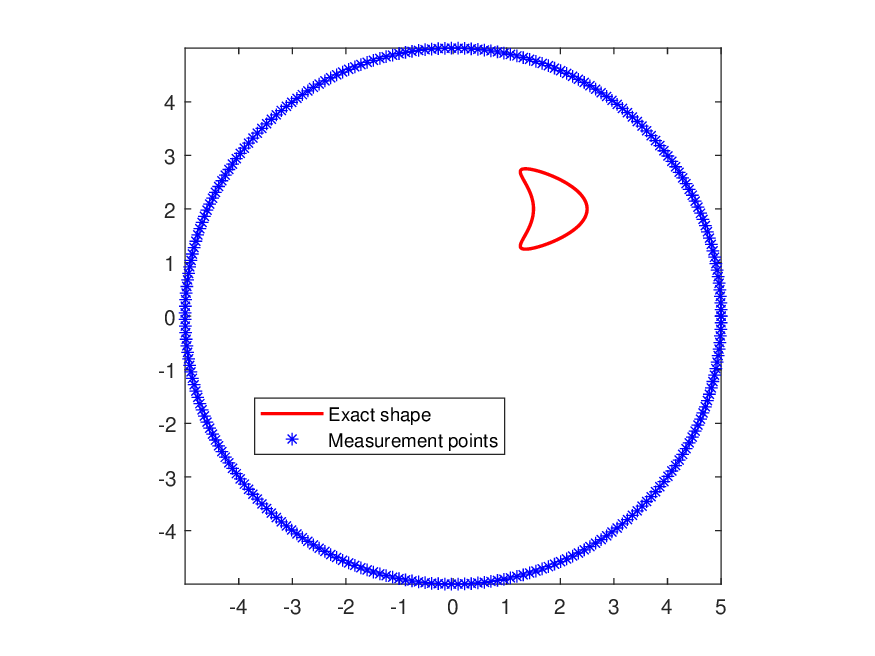}
	}
	\subfigure[$\xi=0.4$]{
		\includegraphics[width=0.3\textwidth]{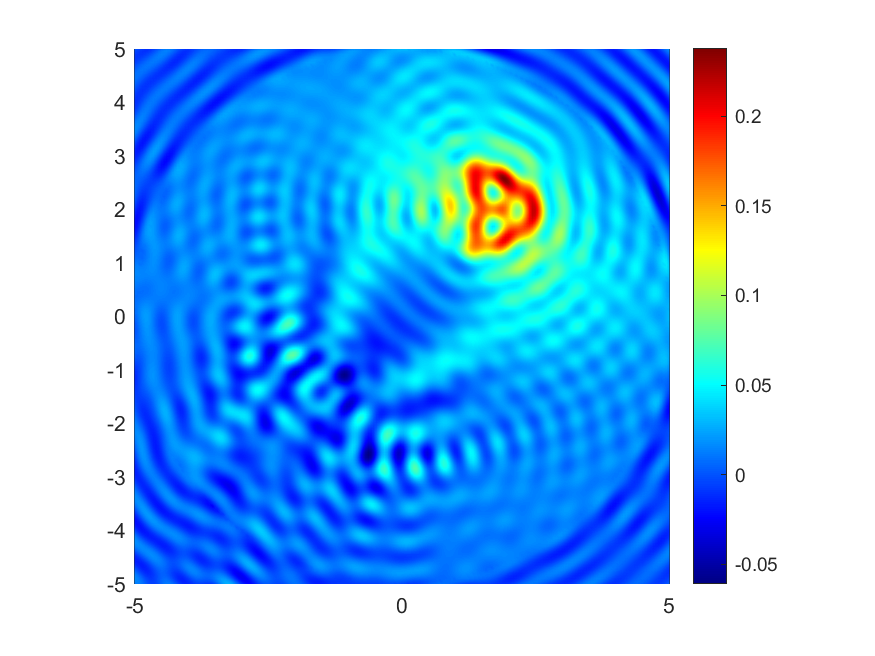}
	}
	\subfigure[$\xi=2$]{
		\includegraphics[width=0.3\textwidth]{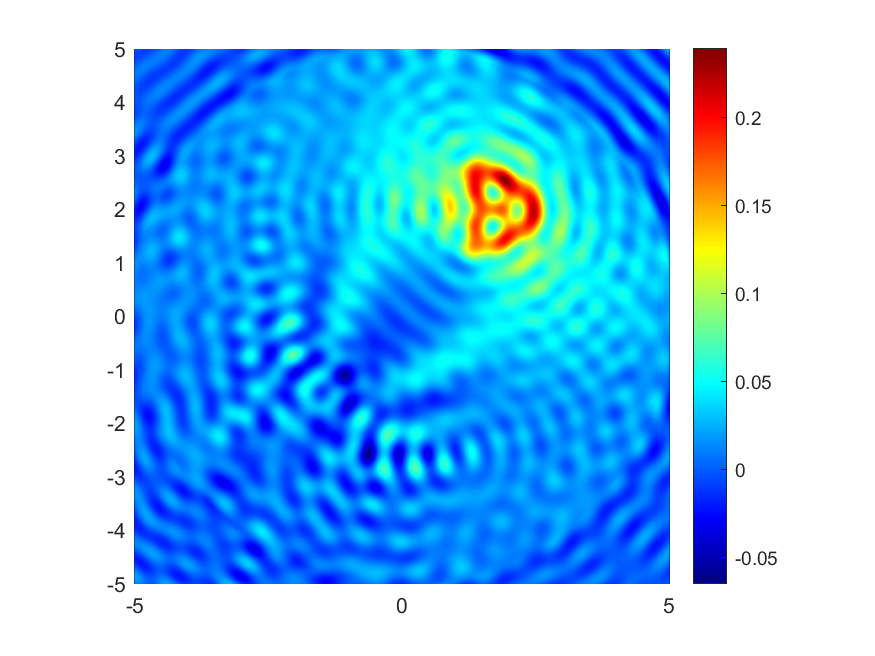}
	}\\
	\subfigure[Exact shape]{
		\includegraphics[width=0.3\textwidth]{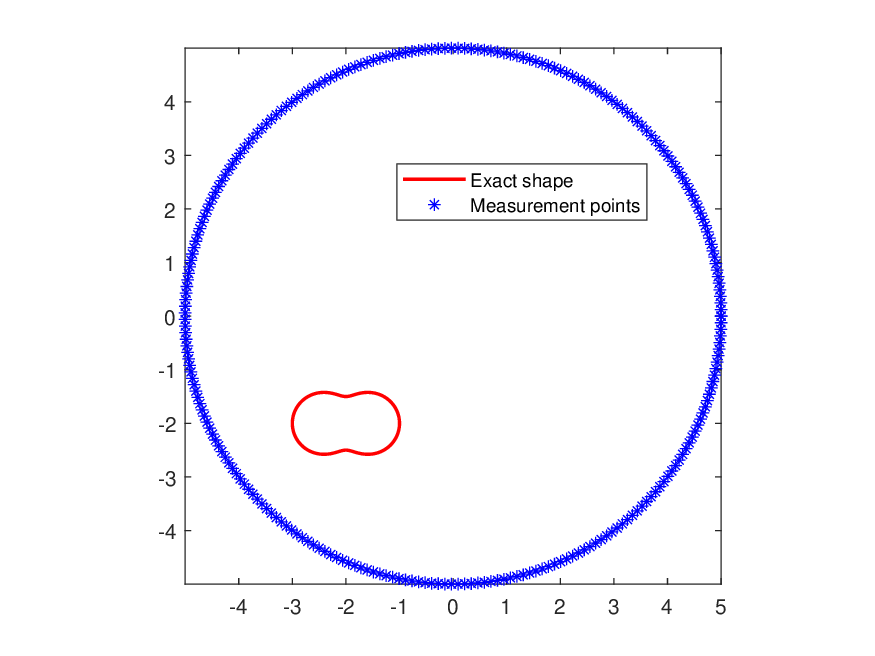}
	}
	\subfigure[$\xi=0.4$]{
		\includegraphics[width=0.3\textwidth]{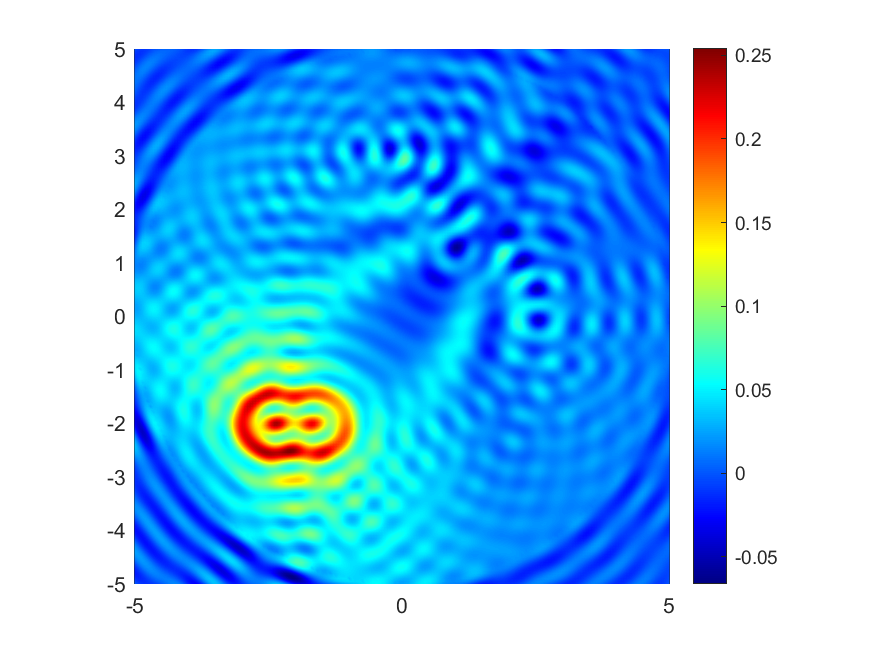}
	}
	\subfigure[$\xi=2$]{
		\includegraphics[width=0.3\textwidth]{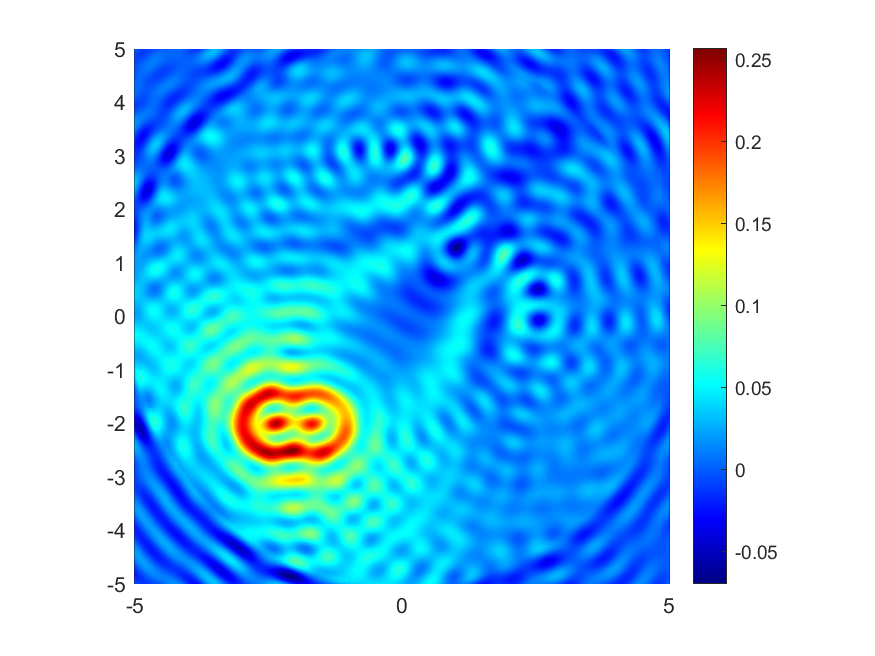}
	}
    \caption{\label{fig:ex1_curve_L256}Recoveries of $\partial D$ by DCM with $\xi=0.4$ and $\xi=2$ in the case of $L=256$ for the kite (Top) and the peanut (Bottom). The first column shows the exact boundary $\partial D$ and $L=256$ measurement points, the second column shows the reconstructed $\partial D$ with $\xi=0.4$ and the third column shows the reconstructed $\partial D$ with $\xi=2$.}
\end{figure}

In this example, we are concerned with the imaging of the single obstacle. To this end, two different obstacles are considered, which are respectively kite-shaped and peanut-shaped. Their boundaries are respectively given by
\begin{equation}
\begin{aligned}
\partial D = \Big\{\big(2+0.5(\cos \theta + 0.65\cos 2\theta - 0.65), 2+0.75\sin \theta\big):\theta\in [0,2\pi]\Big\}.
\nonumber
\end{aligned}
\end{equation}
and
\begin{equation}
\begin{aligned}
\partial D = \Big\{\big(-2+\sqrt{\cos^{2}\theta+0.25\sin^{2}\theta}\cos\theta, -2+\sqrt{\cos^{2}\theta+0.25\sin^{2}\theta}\sin\theta\big):\theta\in [0,2\pi]\Big\}.
\nonumber
\end{aligned}
\end{equation}
This kite-shaped obstacle is also considered in \cite{GHM1}, allowing us to compare our reconstructions with those of \cite{GHM1}.
\subsubsection{The influence of the perturbation $\xi$ and the number $L$ of incident random sources}
We now consider to investigate the influence of the perturbation $\xi$ and the number $L$ of random point sources on the solution of the inverse problem by using noise-free data, i.e., $\delta=0$. The wavenumber is $k=2\pi$.

We first look at how $\xi$ affects the reconstruction. For this purpose, we set $\xi=0.4$ and $\xi=2$, respectively. Moreover, we choose $L=64$. The imaginary parts of the cross-correlations $C_{jm}$ with $\xi=0.4$ and $\xi=2$ for the kite are shown in Fig. \ref{fig:ex1_Cjm_L64}(a) and (b), and the imaginary parts of the cross-correlations $C_{jm}$ for the peanut are shown in Fig. \ref{fig:ex1_Cjm_L64}(d) and (e). The corresponding approximate imaginary parts of $N^{s}_{jm}$ for the kite and peanut are respectively presented in Fig. \ref{fig:ex1_Cjm_L64}(c) and (f). Fig. \ref{fig:ex1_Cjm_L64} shows that larger $\xi$ will generate larger approximation error between $C_{jm}$ and $N^{s}_{jm}$. The boundary reconstructions of our proposed DCM with $\xi=0.4$ and $\xi=2$ are shown in Fig. \ref{fig:ex1_curve_L64}. It is clear in Fig. \ref{fig:ex1_curve_L64} that, our proposed DCM can obtain satisfactory reconstructions. Moreover, Fig. \ref{fig:ex1_curve_L64} indicates that as $\xi$ increases, the reconstruction of the location and shape will be accordingly distorted. This is reasonable because $C_{jm}$ is the discrete approximation of $N^{s}_{jm}$ and the corresponding discrete error that will affect the reconstruction deeply relies on $\xi$. It is worth noticing that our proposed DCM also works well for $\xi=2$ although Theorem \ref{thm:ind_resolution_rem} does not cover $\xi\geq 1/2$.

It is expected by Theorem \ref{thm:ind_resolution_rem} that the inversion quality can be improved by more incident random sources. Hence, the total number $L$ is increased to be $L=256$. In the case of $L=256$, the imaginary parts of the corresponding cross-correlations $C_{jm}$ with $\xi=0.4$ and $\xi=2$ for the kite can be seen in Fig. \ref{fig:ex1_Cjm_L256}(a) and (b), and the imaginary parts of the cross-correlations $C_{jm}$ for the peanut are shown Fig. \ref{fig:ex1_Cjm_L256}(d) and (e). The approximate imaginary parts of $N^{s}_{jm}$ are shown in Fig. \ref{fig:ex1_Cjm_L256}(c) and (f). We find from Fig. \ref{fig:ex1_Cjm_L256} that compared with the results in Fig. \ref{fig:ex1_Cjm_L64}, the discrete error between $C_{jm}$ and $N^{s}_{jm}$ is significantly reduced particularly for $\xi=0.4$. The reconstructions of the kite and peanut for $L=256$ are presented in Fig. \ref{fig:ex1_curve_L256}. As expected, in Fig. \ref{fig:ex1_curve_L256} the reconstructions are improved when $L$ increases. This confirms Theorem \ref{thm:ind_resolution_rem}. Therefore, to meet the required conditions of Theorem \ref{thm:ind_resolution_rem} and maintain the high-quality inversion, we set $\xi=0.4$ and $L=256$ in all the following examples. Moreover, the phenomenon in this experiment also holds for the noisy case.

\subsubsection{The influence of the wavenumber $k$}

\begin{figure}[t]
	\centering
	\subfigure[$\mathrm{Im}(C_{jm}),k=4\pi$]{
		\includegraphics[width=0.3\textwidth]{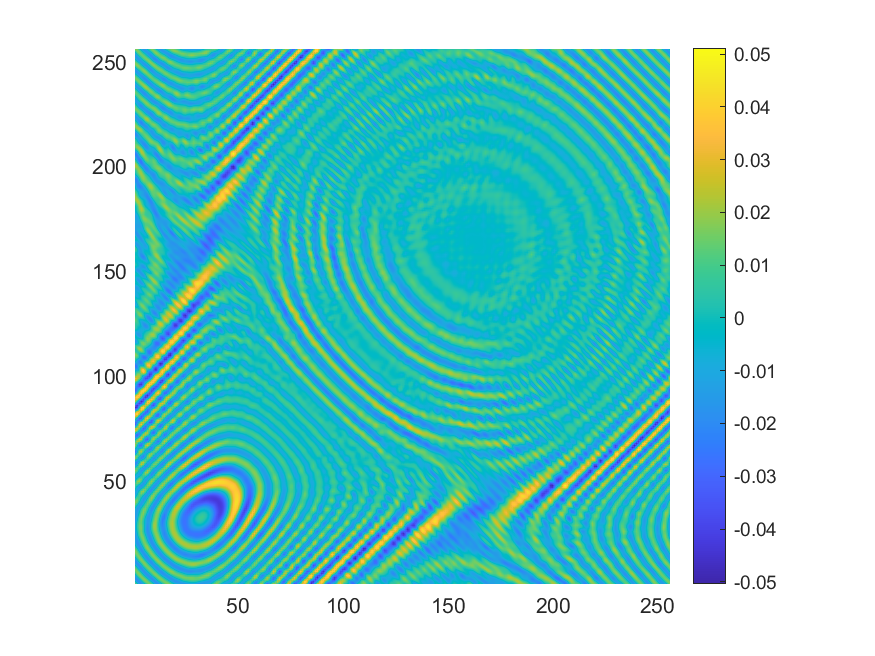}
	}
	\subfigure[$\mathrm{Im}(C_{jm}),k=4\pi$]{
		\includegraphics[width=0.3\textwidth]{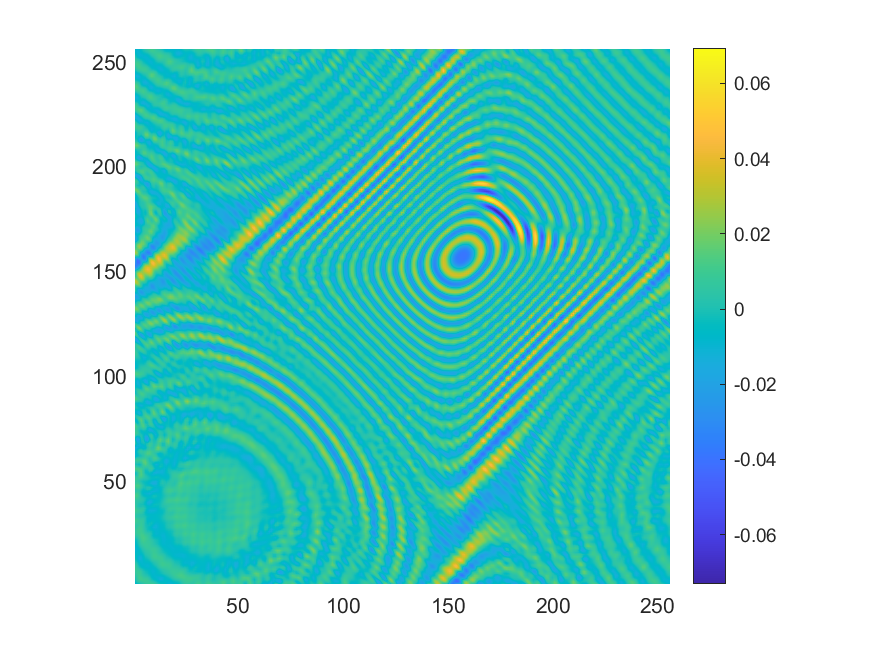}
	}\\
	\subfigure[$\mathrm{Im}(N_{jm}^{s}),k=4\pi$]{
		\includegraphics[width=0.3\textwidth]{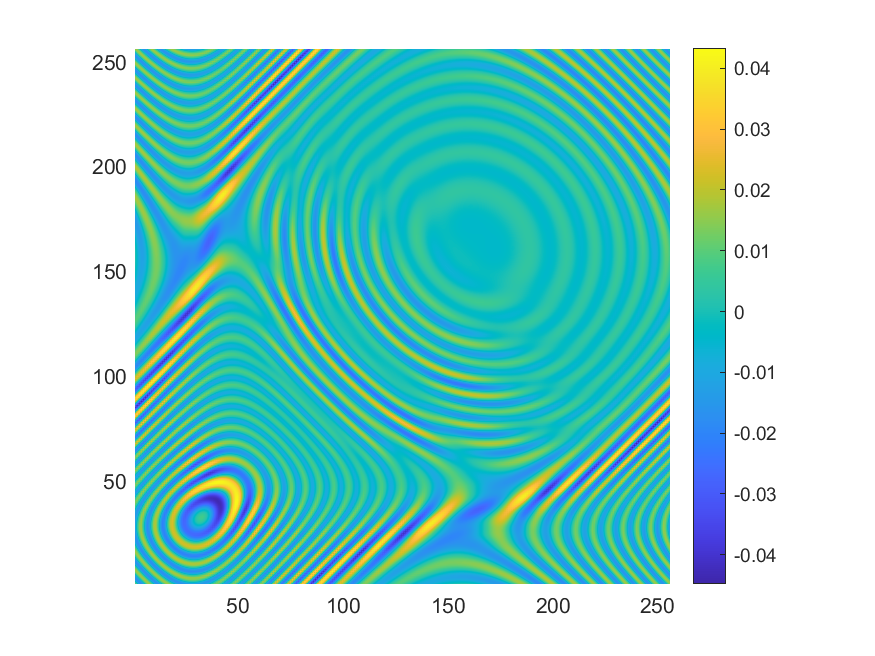}
	}
	\subfigure[$\mathrm{Im}(N_{jm}^{s}),k=4\pi$]{
		\includegraphics[width=0.3\textwidth]{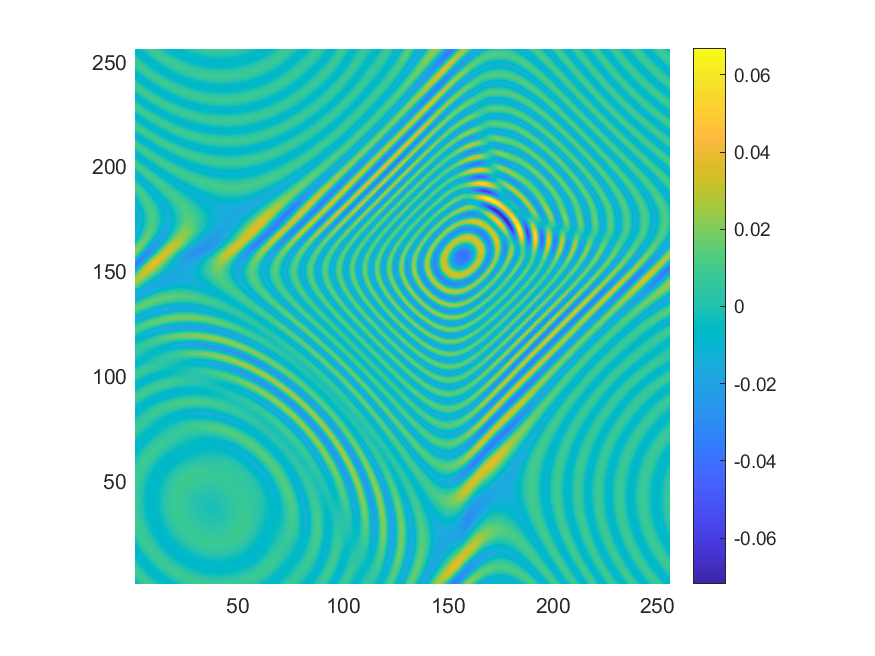}
	}\\
	\subfigure[$\mathrm{Im}(C_{jm}), k=8\pi$]{
		\includegraphics[width=0.3\textwidth]{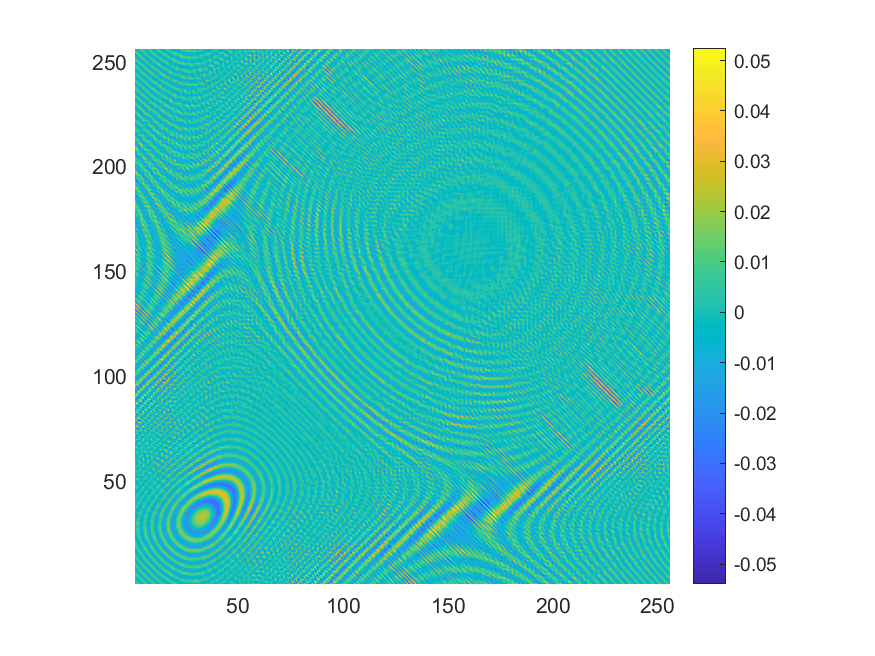}
	}
	\subfigure[$\mathrm{Im}(C_{jm}), k=8\pi$]{
		\includegraphics[width=0.3\textwidth]{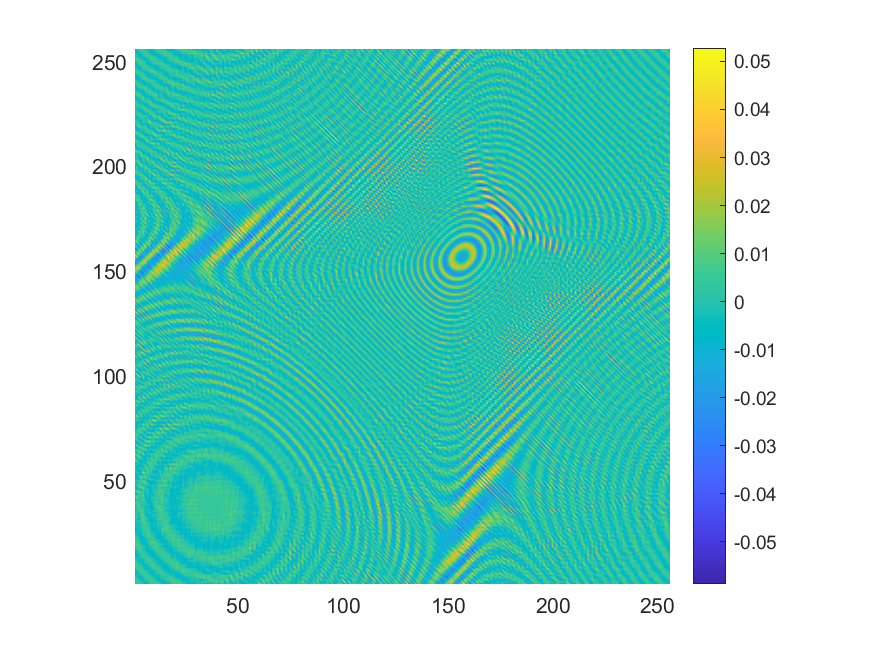}
	}\\
	\subfigure[$\mathrm{Im}(N_{jm}^{s}), k=8\pi$]{
		\includegraphics[width=0.3\textwidth]{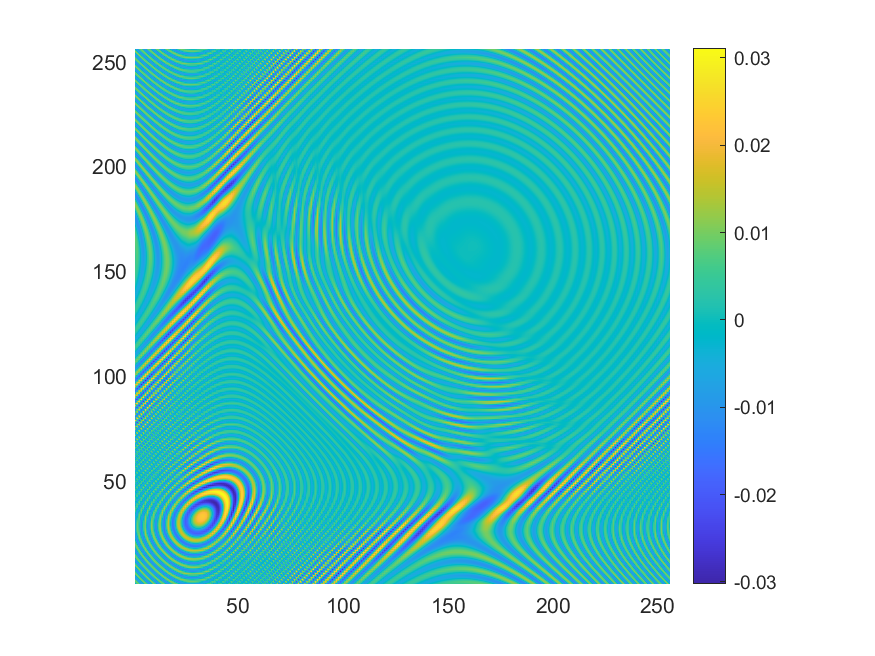}
	}
	\subfigure[$\mathrm{Im}(N_{jm}^{s}), k=8\pi$]{
		\includegraphics[width=0.3\textwidth]{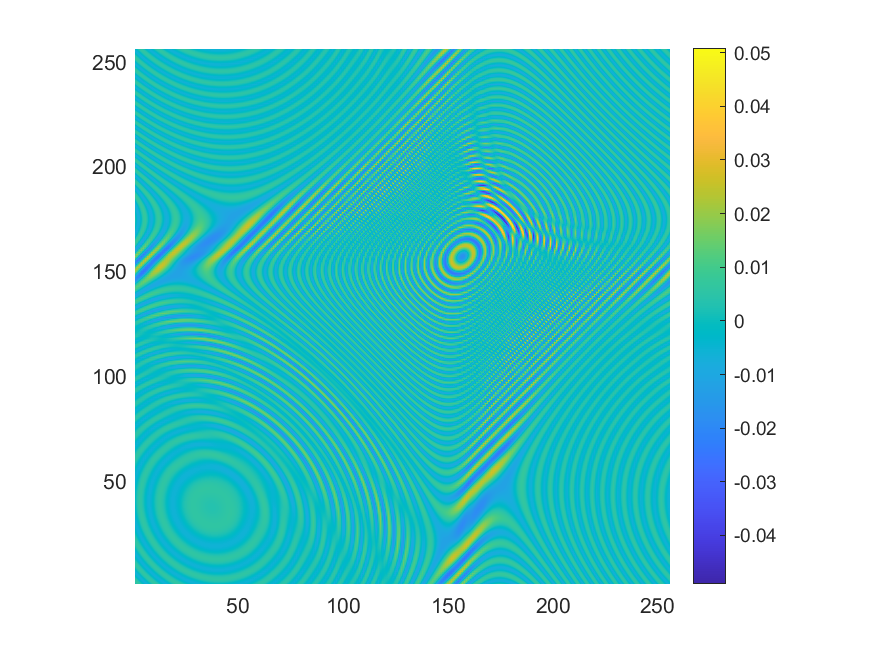}
	}
	\caption{\label{fig:ex2_Cjm}The imaginary parts of $C_{jm}$ and $N^{s}_{jm}$ with $k=4\pi$ and $k=8\pi$ for the kite (Left) and the peanut (Right). The first row displays the imaginary parts of $C_{jm}$ with $k=4\pi$, the second row displays the imaginary parts of $N^{s}_{jm}$ with $k=4\pi$, the third row displays the imaginary parts of $C_{jm}$ with $k=8\pi$ and the fourth row displays the imaginary parts of $N^{s}_{jm}$ with $k=8\pi$.}
\end{figure}

\begin{figure}[h]
	\centering
	\subfigure[Exact shape]{
		\includegraphics[width=0.3\textwidth]{exact_L256_kite.eps}
	}
	\subfigure[$k=4\pi$]{
		\includegraphics[width=0.3\textwidth]{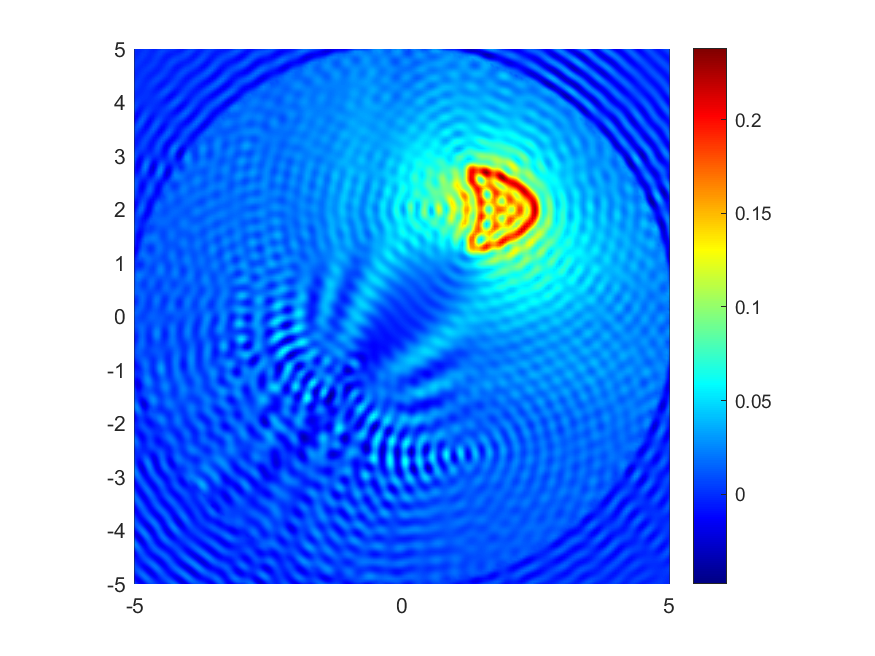}
	}
	\subfigure[$k=8\pi$]{
		\includegraphics[width=0.3\textwidth]{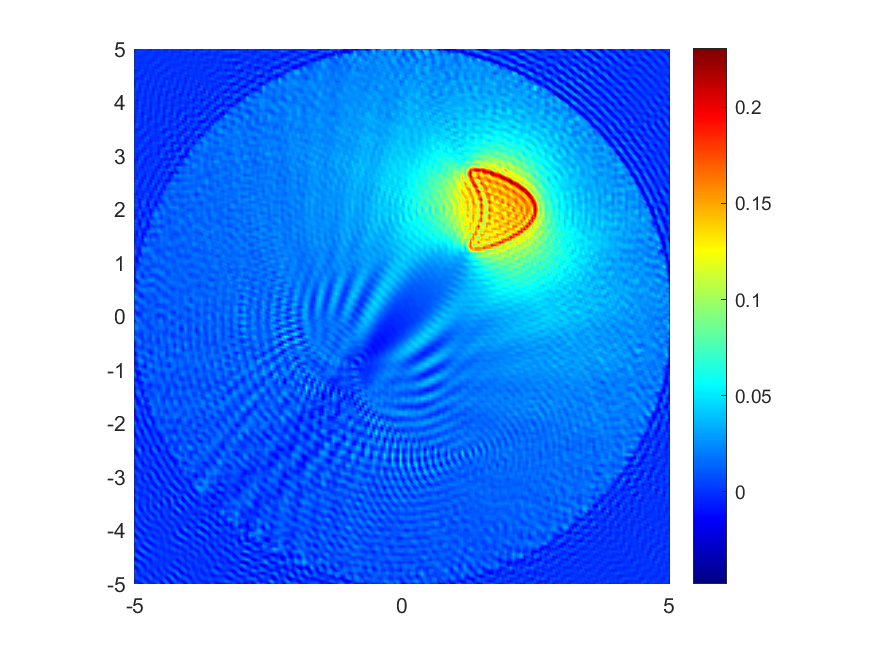}
	}\\
	\subfigure[Exact shape]{
		\includegraphics[width=0.3\textwidth]{exact_L256_peanut.eps}
	}
	\subfigure[$k=4\pi$]{
		\includegraphics[width=0.3\textwidth]{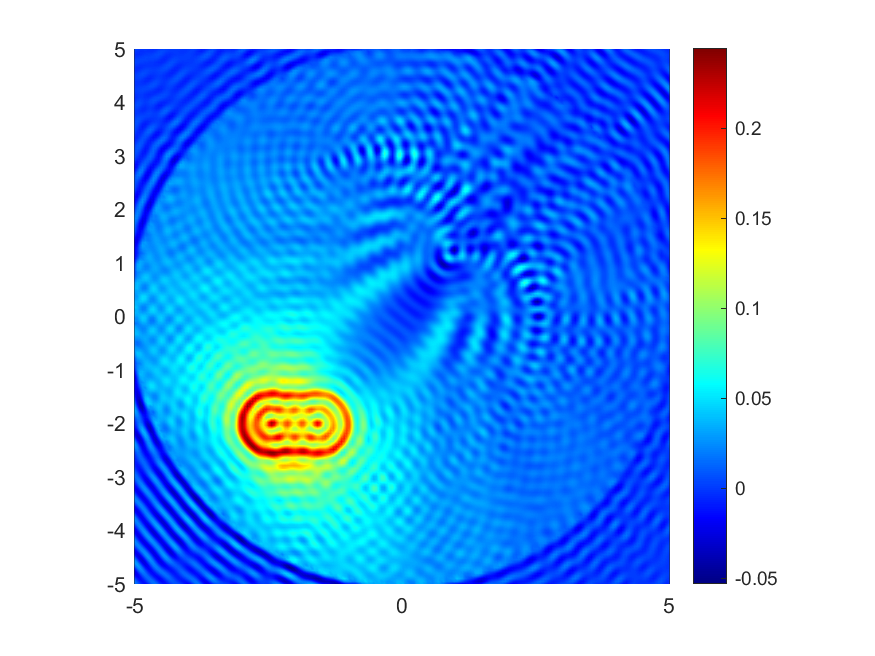}
	}
	\subfigure[$k=8\pi$]{
		\includegraphics[width=0.3\textwidth]{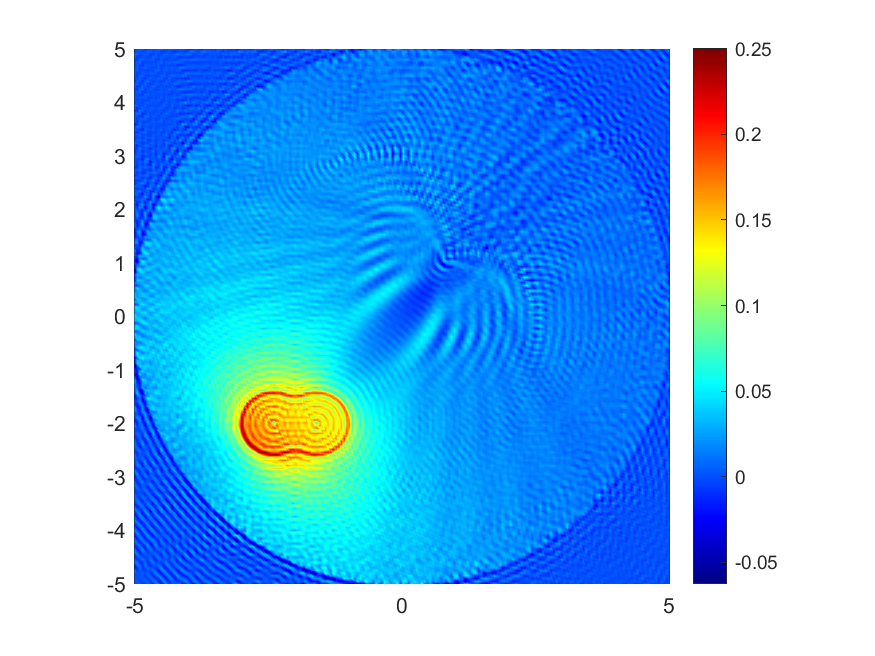}
	}
    \caption{\label{fig:ex2_curve}Recoveries of $\partial D$ by DCM with $k=4\pi$ and $k=8\pi$ for the kite (Top) and the peanut (Bottom). The first column shows the exact boundary $\partial D$ and $L=256$ measurement points, the second column shows the reconstructed $\partial D$ with $k=4\pi$ and the third column shows the reconstructed $\partial D$ with $k=8\pi$.}
\end{figure}

The obstacle reconstructions with different $k$ are considered by using noise-free data, i.e., $\delta=0$. We set $k=4\pi$ and $k=8\pi$. The imaginary parts of the cross-correlations $C_{jm}$ with $k=4\pi$ and $k=8\pi$ for the kite-shaped obstacle and the peanut-shaped obstacle are plotted in the first and third rows of Fig. \ref{fig:ex2_Cjm}. The approximate active imaginary parts of $N^{s}_{jm}$ are shown in the second and fourth rows of Fig. \ref{fig:ex2_Cjm}, respectively. The results of Fig. \ref{fig:ex2_Cjm} also confirm the formula \eqref{eq:app_c_scattered} that describes the relationship between $C_{jm}$ and $N^{s}_{jm}$. We present the boundary reconstructions of $\partial D$ with $k=4\pi$ and $k=8\pi$ for the kite and the peanut in Fig. \ref{fig:ex2_curve}.

It is noted in Fig. \ref{fig:ex2_curve} that with the increase of the wavenumber $k$, more detailed geometrical information can be attained, which can significantly improve inversion quality. This phenomenon occurs mainly because as the wavenumber $k$ increases, smaller probe wavelength can be used and $k\int_{S}|\psi^{\infty}(\hat{x},\tau)|^{2}\mathrm{d}\hat{x}$ in Theorem \ref{thm:ind_resolution_rem} is accordingly increased. Furthermore, compared with the reconstructions of the kite-shaped scatterer in \cite{GHM1} for high wavenumber, reconstructions obtained by our DCM are more accurate.

\subsubsection{The influence of the noise level $\delta$}

\begin{figure}[h]
	\centering
	\subfigure[$\mathrm{Im}(C^{\delta}_{jm}),\delta=0.2$]{
		\includegraphics[width=0.3\textwidth]{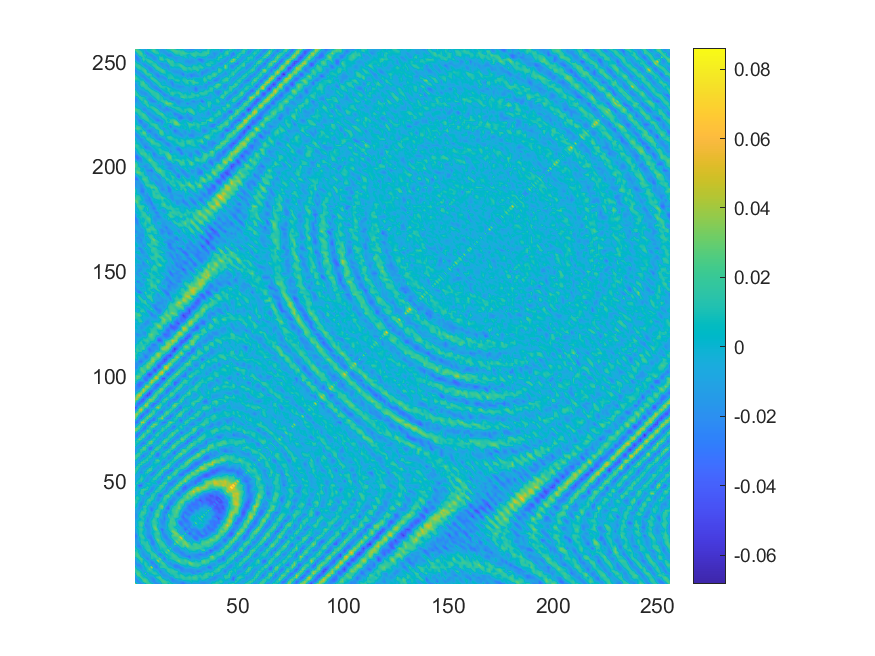}
	}
	\subfigure[$\mathrm{Im}(C^{\delta}_{jm}),\delta=0.4$]{
		\includegraphics[width=0.3\textwidth]{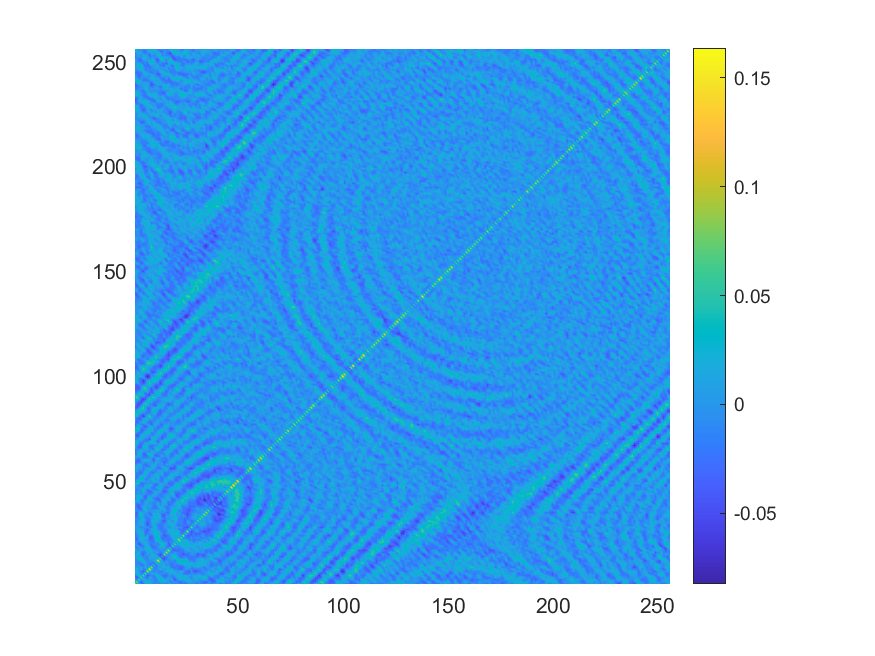}
	}
	\subfigure[$\mathrm{Im}(N_{jm}^{s})$]{
		\includegraphics[width=0.3\textwidth]{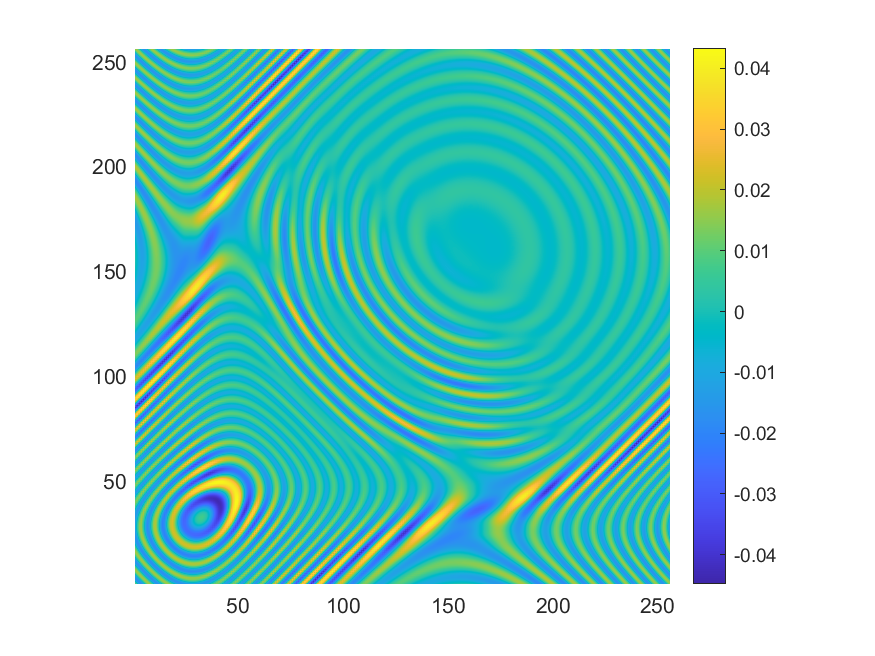}
	}\\
	\subfigure[$\mathrm{Im}(C^{\delta}_{jm}),\delta=0.2$]{
		\includegraphics[width=0.3\textwidth]{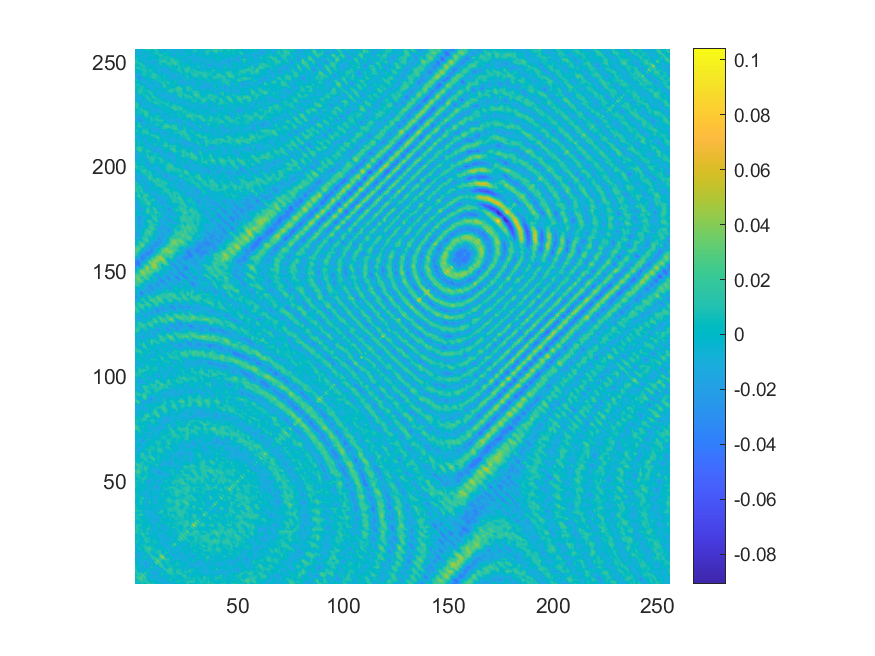}
	}
	\subfigure[$\mathrm{Im}(C^{\delta}_{jm}),\delta=0.4$]{
		\includegraphics[width=0.3\textwidth]{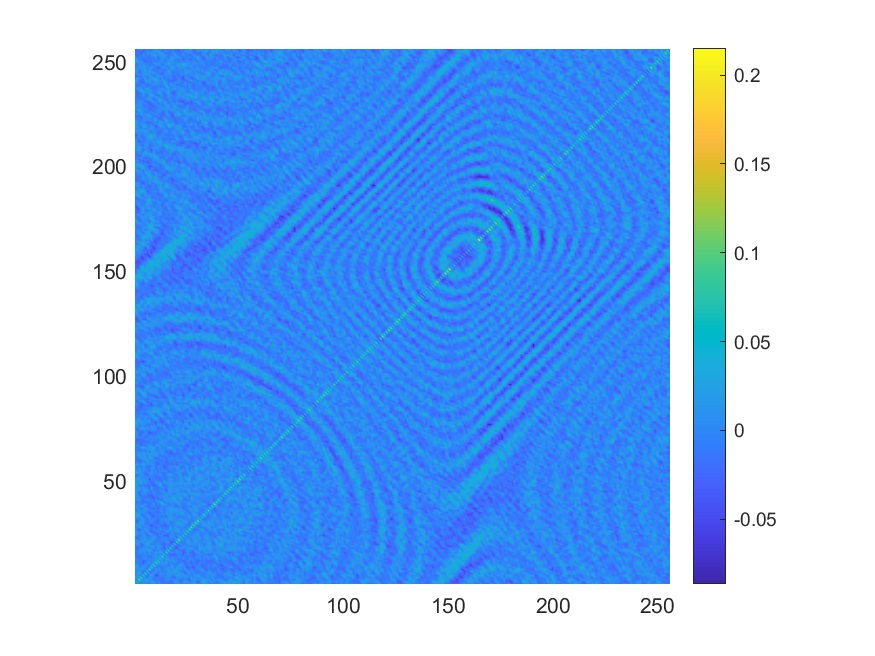}
	}
	\subfigure[$\mathrm{Im}(N_{jm}^{s})$]{
		\includegraphics[width=0.3\textwidth]{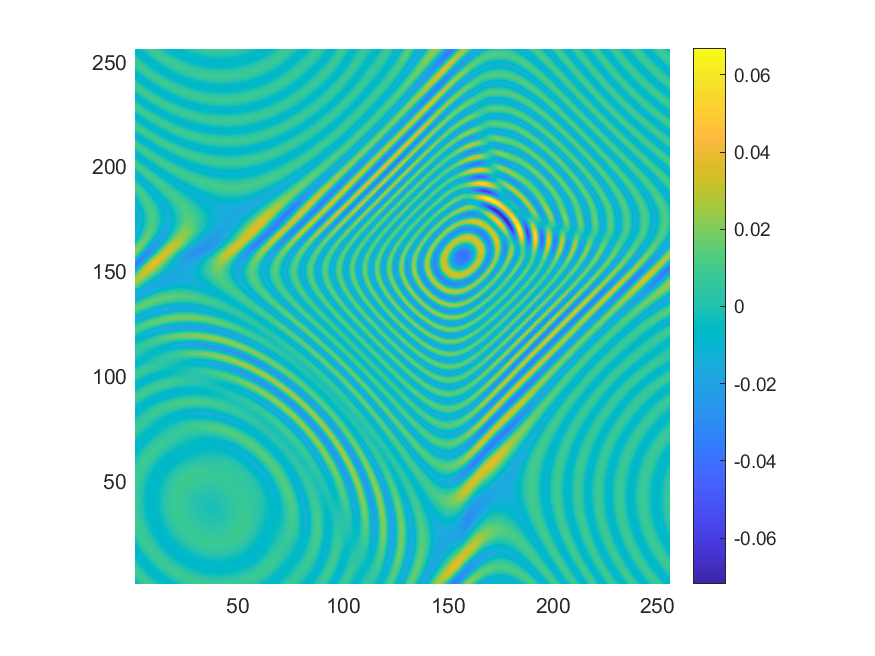}
	}
	\caption{\label{fig:ex3_Cjm}The imaginary parts of $C^{\delta}_{jm}$ and $N^{s}_{jm}$ with $\delta=0.2$ and $\delta=0.4$ for the kite (Top) and the peanut (Bottom). The first column displays the imaginary parts of $C^{\delta}_{jm}$ with $\delta=0.2$, the second column displays the imaginary parts of $C^{\delta}_{jm}$ with $\delta=0.4$ and the third column displays the imaginary parts of $N^{s}_{jm}$.}
\end{figure}

\begin{figure}[h]
	\centering
	\subfigure[Exact shape]{
		\includegraphics[width=0.3\textwidth]{exact_L256_kite.eps}
	}
	\subfigure[$\delta=0.2$]{
		\includegraphics[width=0.3\textwidth]{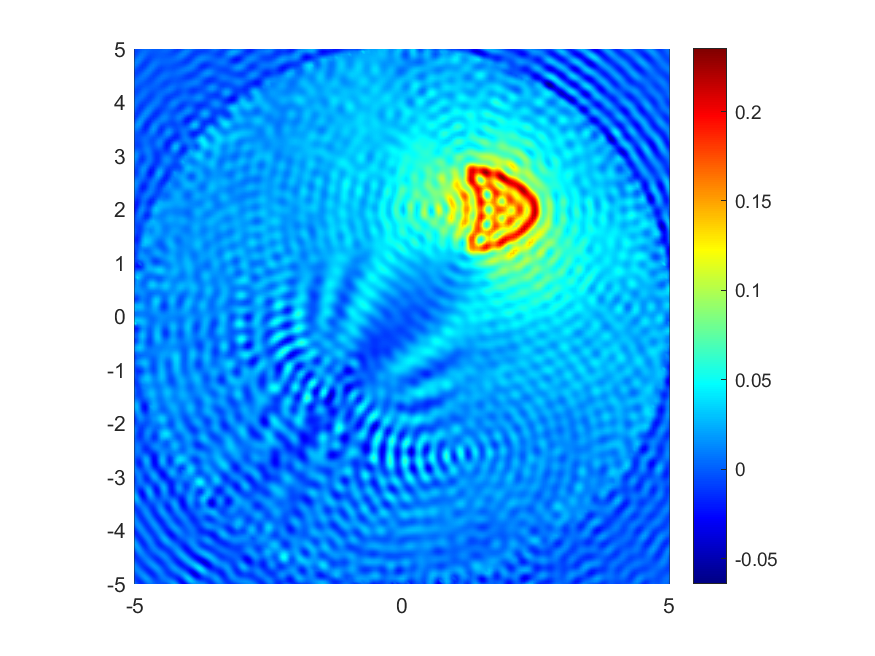}
	}
	\subfigure[$\delta=0.4$]{
		\includegraphics[width=0.3\textwidth]{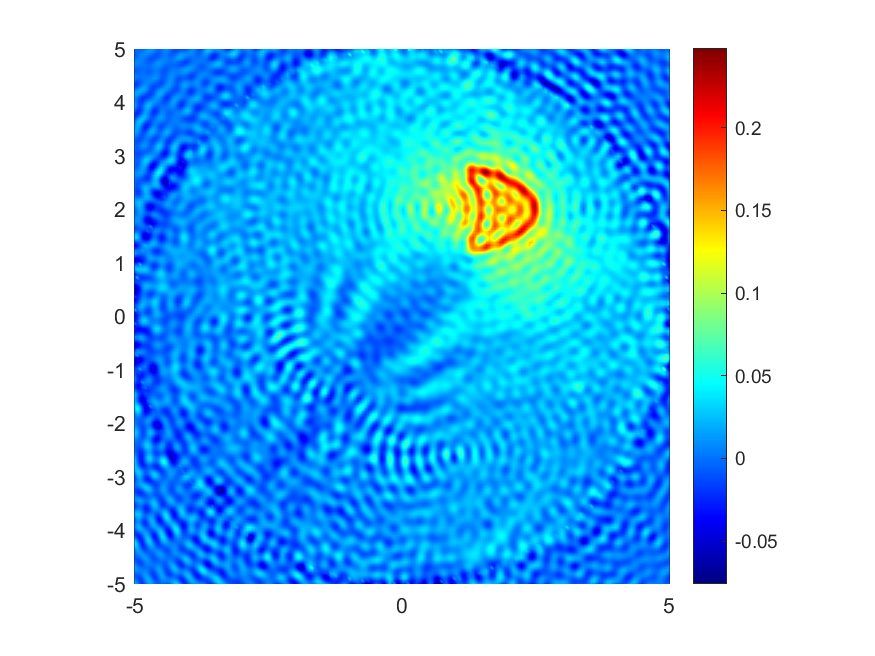}
	}\\
	\subfigure[Exact shape]{
		\includegraphics[width=0.3\textwidth]{exact_L256_peanut.eps}
	}
	\subfigure[$\delta=0.2$]{
		\includegraphics[width=0.3\textwidth]{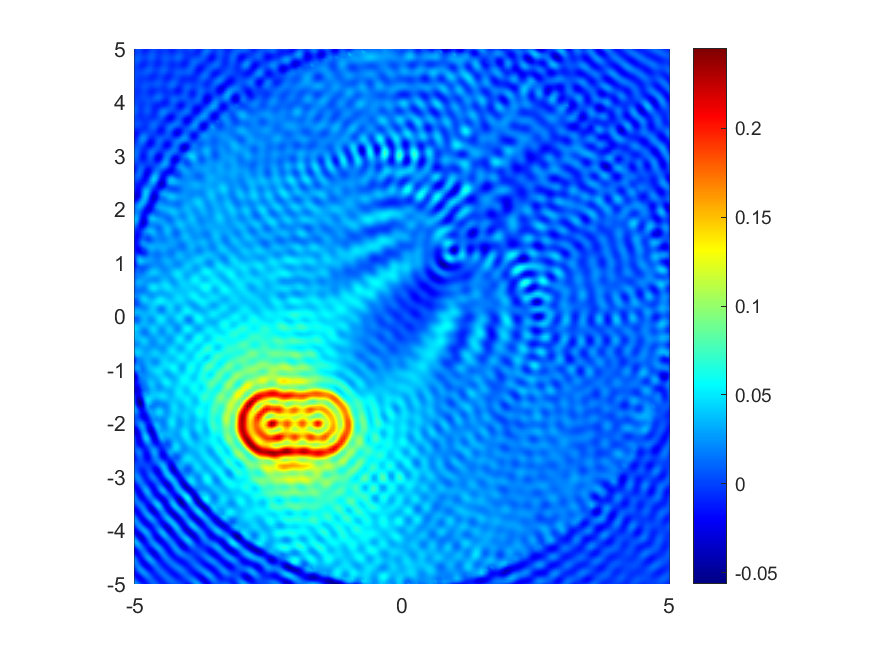}
	}
	\subfigure[$\delta=0.4$]{
		\includegraphics[width=0.3\textwidth]{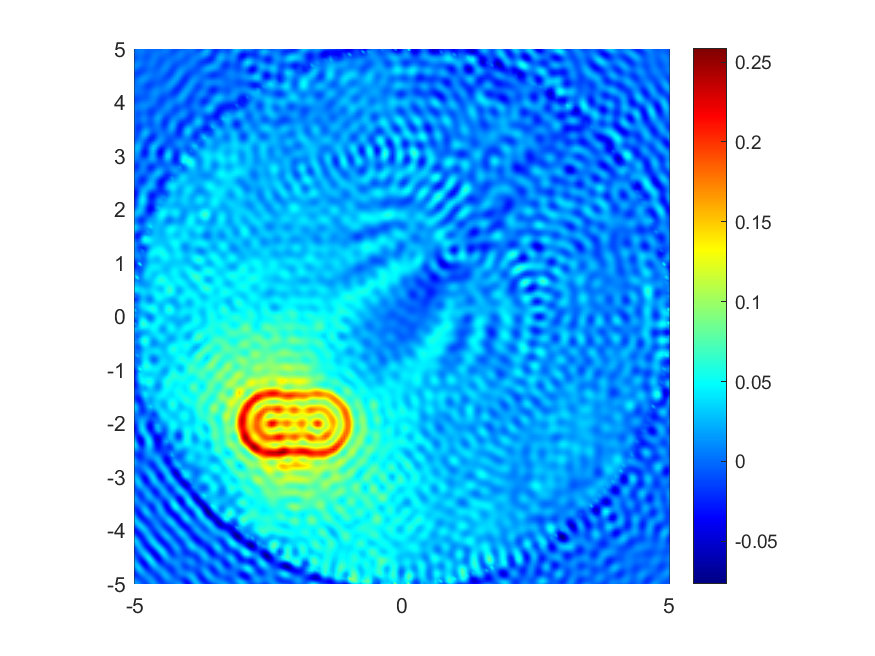}
	}
    \caption{\label{fig:ex3_curve}Recoveries of $\partial D$ by DCM with $\delta=0.2$ and $\delta=0.4$ for the kite (Top) and the peanut (Bottom). The first column shows the exact boundary $\partial D$ and $L=256$ measurement points, the second column shows the reconstructed $\partial D$ with $\delta=0.2$ and the third column shows the reconstructed $\partial D$ with $\delta=0.4$.}
\end{figure}

The above two experiments are only conducted in the noise-free case. However, the observation data is typically perturbed by the noise due to the limitation of many practical scenarios. Hence, in this example, we shall consider the influence of the noise level $\delta$. To do so, we set $\delta=0.2$ and $\delta=0.4$, respectively. The wavenumber is $k=4\pi$. We present the imaginary parts of the cross-correlations $C^{\delta}_{jm}$ with $\delta=0.2$ and $\delta=0.4$ for the kite-shaped obstacle in Fig. \ref{fig:ex3_Cjm}(a) and (b), and the imaginary parts of the cross-correlations for the peanut-shaped obstacle are shown in Fig. \ref{fig:ex3_Cjm}(d) and (e). The approximate active imaginary parts of $N^{s}_{jm}$ for the kite and the peanut can be seen in Fig. \ref{fig:ex3_Cjm}(c) and (f). Clearly, the error between $C^{\delta}_{jm}$ and $N^{s}_{jm}$ will become larger when the noise level $\delta$ increases, which is reasonable since in the noisy case the error between $C^{\delta}_{jm}$ and $N^{s}_{jm}$ is generated by not only the quadrature error of the trapezoidal rule approximation but also the added noise. The recovery results of the boundary $\partial D$ are shown in Fig. \ref{fig:ex3_curve}, from which we clearly observe that our proposed DCM can obtain satisfactory reconstructions even for the case of $\delta=0.4$. In fact, this is expected since Theorem \ref{thm:stability_rem} has theoretically indicated that our proposed DCM is extremely stable with respect to the added noise.

\subsection{Example 2: Multiple obstacles case}
In this example, we aim to investigate the performance of our DCM in the more complicated case of multiple obstacles. The noise level is chosen as $\delta=0.2$.

\subsubsection{Tow obstacles close to each other}

\begin{figure}[h]
	\centering
	\subfigure[$\mathrm{Im}(C^{\delta}_{jm}),k=2\pi$]{
		\includegraphics[width=0.3\textwidth]{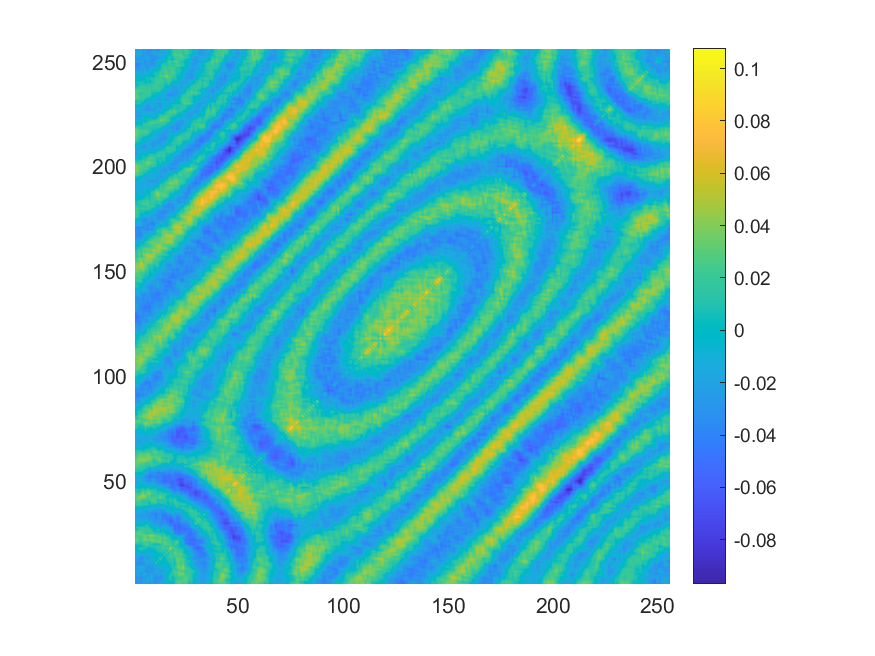}
	}
	\subfigure[$\mathrm{Im}(N_{jm}^{s}),k=2\pi$]{
		\includegraphics[width=0.3\textwidth]{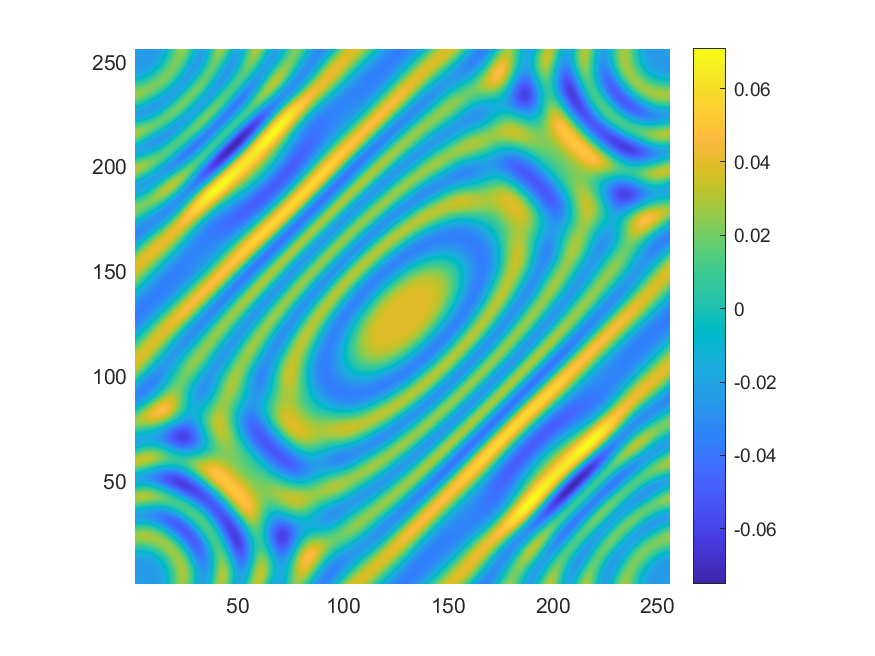}
	}\\
	\subfigure[$\mathrm{Im}(C^{\delta}_{jm}), k=4\pi$]{
		\includegraphics[width=0.3\textwidth]{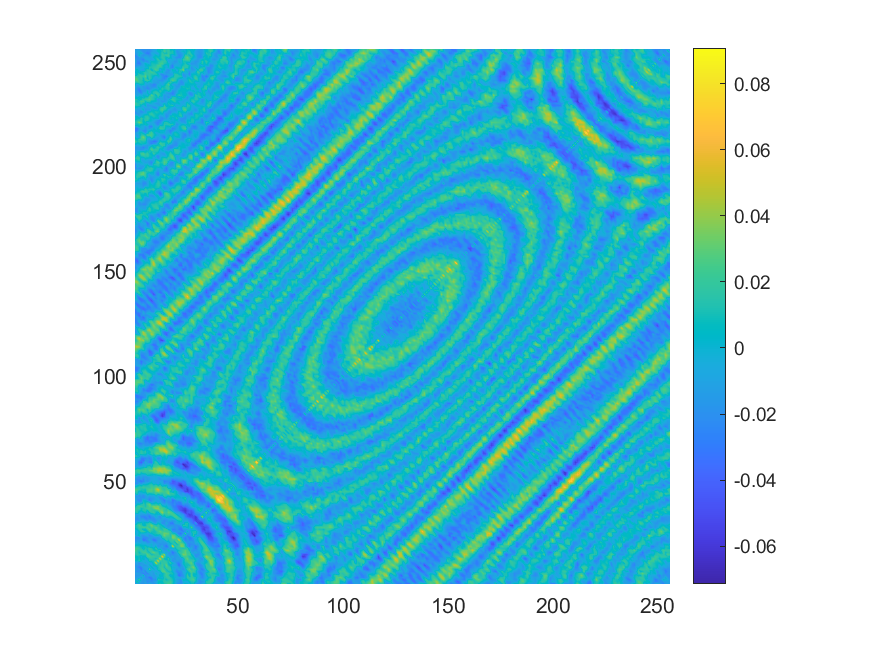}
	}
	\subfigure[$\mathrm{Im}(N_{jm}^{s}), k=4\pi$]{
		\includegraphics[width=0.3\textwidth]{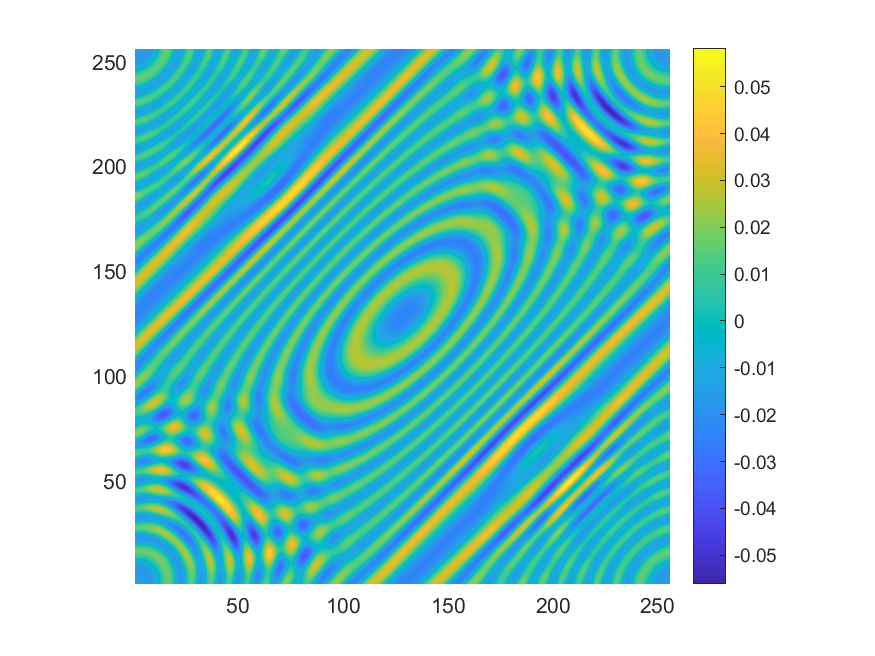}
	}
	\caption{\label{fig:ex4_limit_Cjm}The imaginary parts of $C^{\delta}_{jm}$ and $N^{s}_{jm}$ with $k=2\pi$ (Top) and $k=4\pi$ (Bottom) for the resolution limit case. The first column displays the imaginary parts of $C^{\delta}_{jm}$ with $k=2\pi$ and $k=4\pi$, and the second column displays the imaginary parts of $N^{s}_{jm}$ with $k=2\pi$ and $k=4\pi$.}
\end{figure}

\begin{figure}[h]
	\centering
	\subfigure[Exact shape]{
		\includegraphics[width=0.3\textwidth]{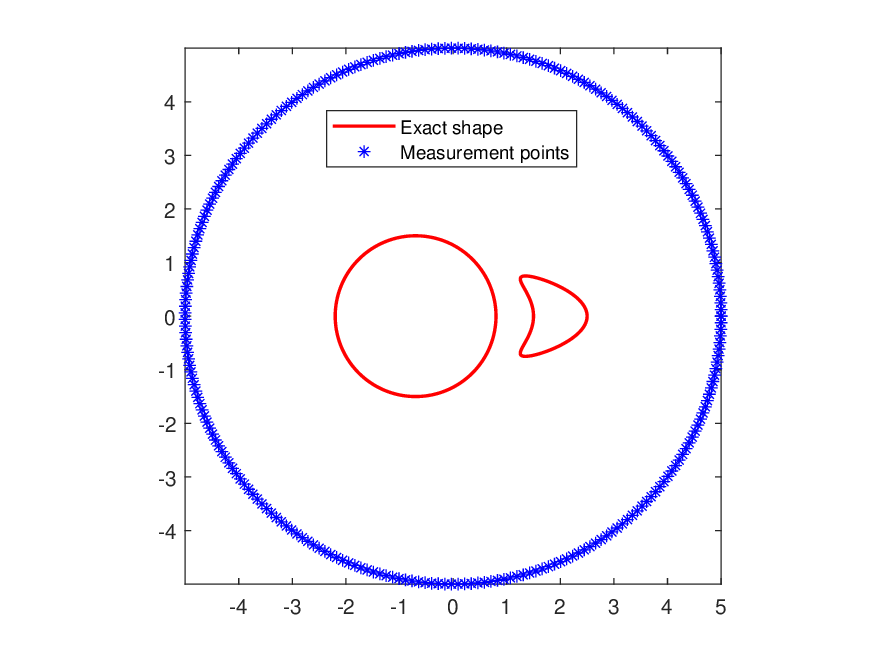}
	}
	\subfigure[$k=2\pi$]{
		\includegraphics[width=0.3\textwidth]{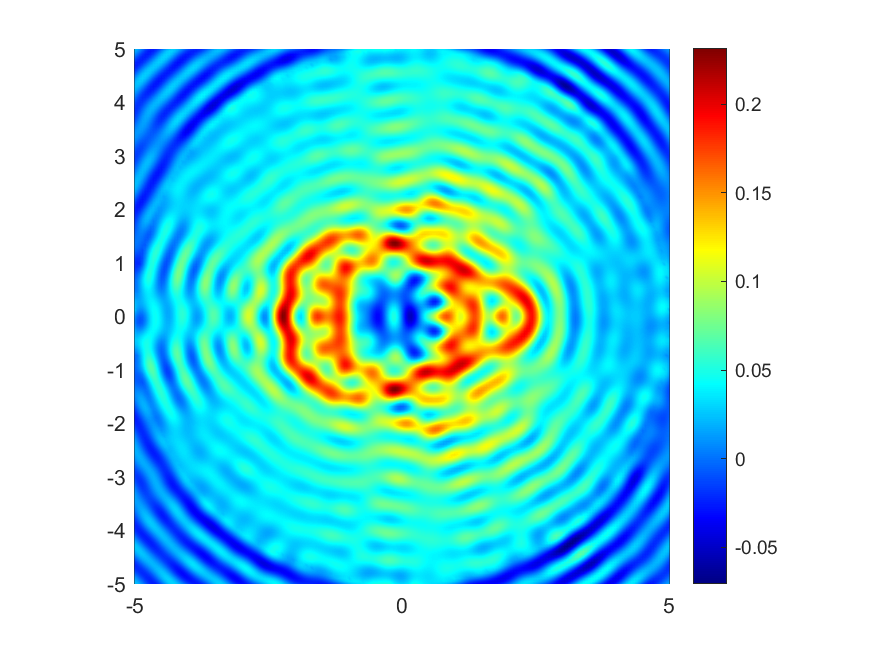}
	}
	\subfigure[$k=4\pi$]{
		\includegraphics[width=0.3\textwidth]{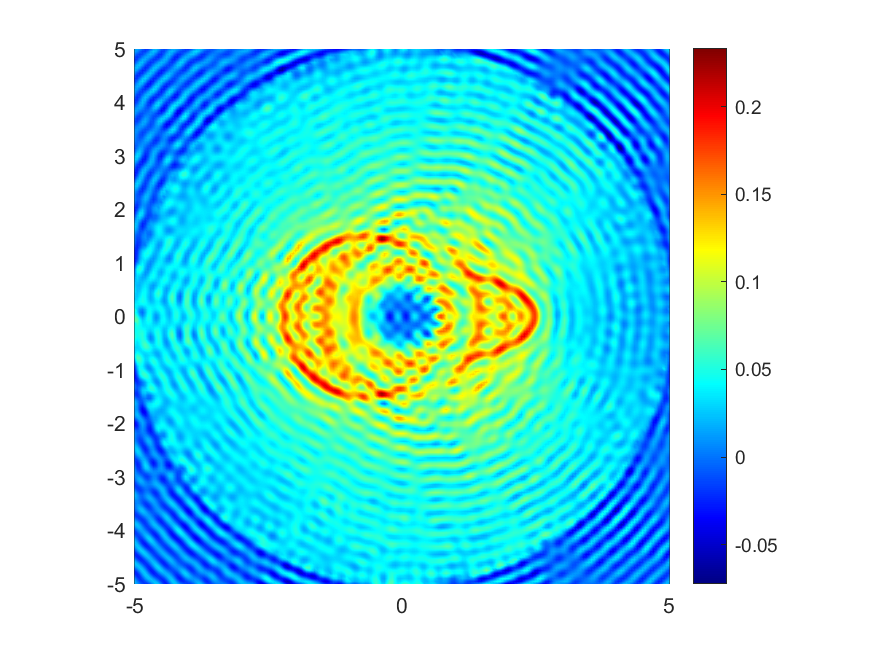}
	}
    \caption{\label{fig:ex4_limit_curve}Recoveries of $\partial D$ by DCM with $k=2\pi$ and $k=4\pi$ for the resolution limit. The first column shows the exact boundary $\partial D$ and $L=256$ measurement points, the second column shows the reconstructed $\partial D$ with $k=2\pi$ and the third column shows the reconstructed $\partial D$ with $k=4\pi$.}
\end{figure}

We consider the imaging of two scatterers close to each other in this experiment. Thus, the considered obstacle $D$ is the union of two bounded simply connected domains, i.e., $D=D_{1}\bigcup D_{2}$. Moreover, these two scatterers are disjoint. Here, the parameterized forms of $D_{1}$ and $D_{2}$ are given by
\begin{equation}
\begin{aligned}
\partial D_{1} = \bigg\{(2+0.5(\cos \theta + 0.65\cos 2\theta - 0.65), 0.75\sin \theta):\theta\in [0,2\pi]\bigg\},
\nonumber
\end{aligned}
\end{equation}
and
\begin{equation}
\begin{aligned}
\partial D_{2} = \bigg\{(-0.7+1.5\cos \theta, 1.5\sin \theta):\theta\in [0,2\pi]\bigg\}.
\nonumber
\end{aligned}
\end{equation}
Indeed, these two obstacles are very close to each other and the distance two obstacles is about 0.5. Therefore, we set $k=2\pi$ and $k=4\pi$ for imaging $D$ and the corresponding probe wavelength defined by $\frac{2\pi}{k}$ are respectively 1 and 0.5. The imaginary parts of the cross-correlations $C^{\delta}_{jm}$ for $k=2\pi$ and $k=4\pi$ are plotted in Fig. \ref{fig:ex4_limit_Cjm}(a) and (c), respectively. The approximate active imaginary parts of $N^{s}_{jm}$ are respectively shown in Fig. \ref{fig:ex4_limit_Cjm}(b) and (d). Fig. \ref{fig:ex4_limit_Cjm} indicates that even for two obstacles close to each other, the approximation relationship between $C^{\delta}_{jm}$ and $N^{s}_{jm}$ also holds. The reconstructions of $\partial D$ in the resolution limit case with $k=2\pi$ and $k=4\pi$ are presented in Fig. \ref{fig:ex4_limit_curve}. In the $k=2\pi$ case, the gap between these two obstacles is about half a wavelength. However, we unfortunately find from Fig. \ref{fig:ex4_limit_curve}(b) that for the wavenumber $k=2\pi$ the considered two obstacles can not be well distinguished. When the wavenumber is increased to $k=4\pi$, the gap between two obstacles can be clearly depicted.

\subsubsection{Multiscale}
We now turn our attention to the case of multiscale. To this end, we set $k=4\pi$ and $k=8\pi$, respectively. Furthermore, the considered obstacle $D=D_{1}\bigcup D_{2}$ is the union of a big pear-shaped obstacle $D_{1}$ located at $(0,0)$ and a small disk centered at $(2,3)$ with the radius 0.2. Here, the parameterized forms of $D_{1}$ and $D_{2}$ are given by
\begin{equation}
\begin{aligned}
\partial D_{1} = \bigg\{(2+0.3\cos 3\theta)(\cos \theta, \sin \theta):\theta\in [0,2\pi]\bigg\},
\nonumber
\end{aligned}
\end{equation}
and
\begin{equation}
\begin{aligned}
\partial D_{2} = \bigg\{(2+0.2\cos \theta, 3+0.2\sin \theta):\theta\in [0,2\pi]\bigg\}.
\nonumber
\end{aligned}
\end{equation}
The imaginary parts of the cross-correlations $C^{\delta}_{jm}$ for $k=4\pi$ and $k=8\pi$ in the multiscale case are plotted in Fig. \ref{fig:ex4_multiscalar_Cjm}(a) and (c), respectively. The approximate active imaginary parts of $N^{s}_{jm}$ are respectively shown in Fig. \ref{fig:ex4_multiscalar_Cjm}(b) and (d). From Fig. \ref{fig:ex4_multiscalar_Cjm} we find that $C^{\delta}_{jm}$ is also the discrete approximation of $N^{s}_{jm}$ for the multiscale case. We show the reconstructions of $\partial D$ in the multiscale case with $k=4\pi$ and $k=8\pi$ in Fig. \ref{fig:ex4_multiscalar_curve}. It is clear in Fig. \ref{fig:ex4_multiscalar_curve} that our proposed DCM is capable of recovering both the big and small obstacles.

\begin{figure}[h]
	\centering
	\subfigure[$\mathrm{Im}(C^{\delta}_{jm}),k=4\pi$]{
		\includegraphics[width=0.3\textwidth]{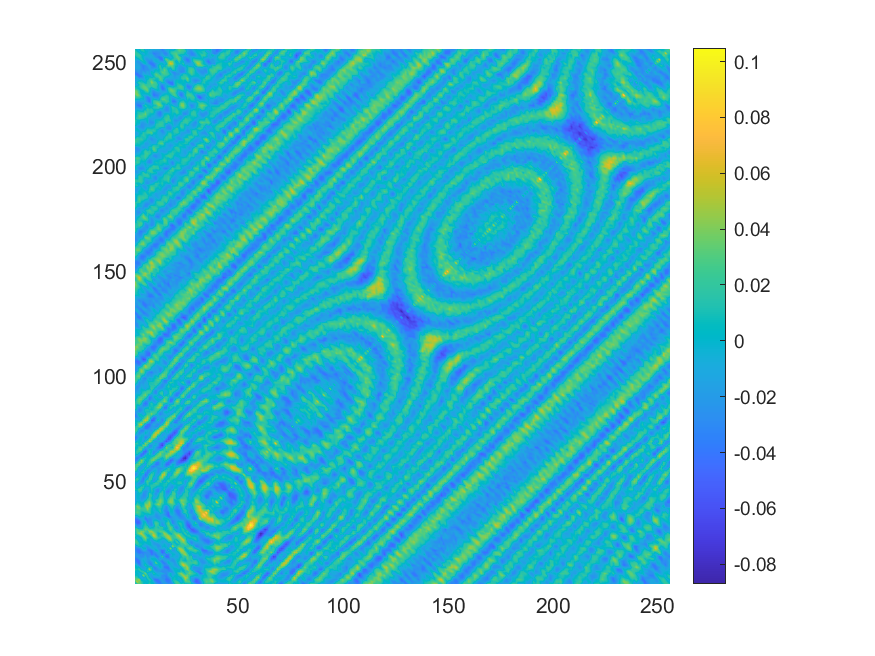}
	}
	\subfigure[$\mathrm{Im}(N_{jm}^{s}),k=4\pi$]{
		\includegraphics[width=0.3\textwidth]{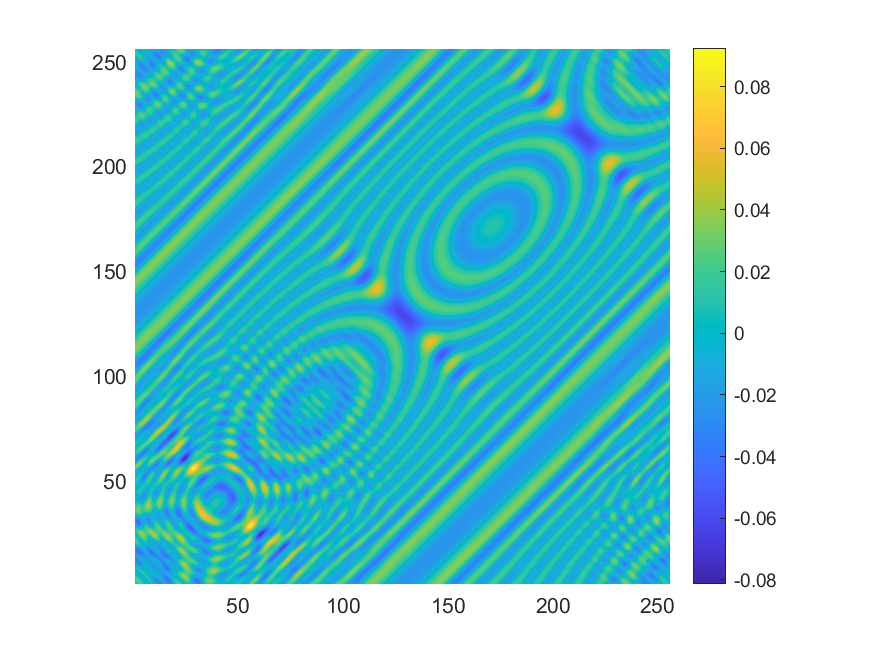}
	}\\
	\subfigure[$\mathrm{Im}(C^{\delta}_{jm}), k=8\pi$]{
		\includegraphics[width=0.3\textwidth]{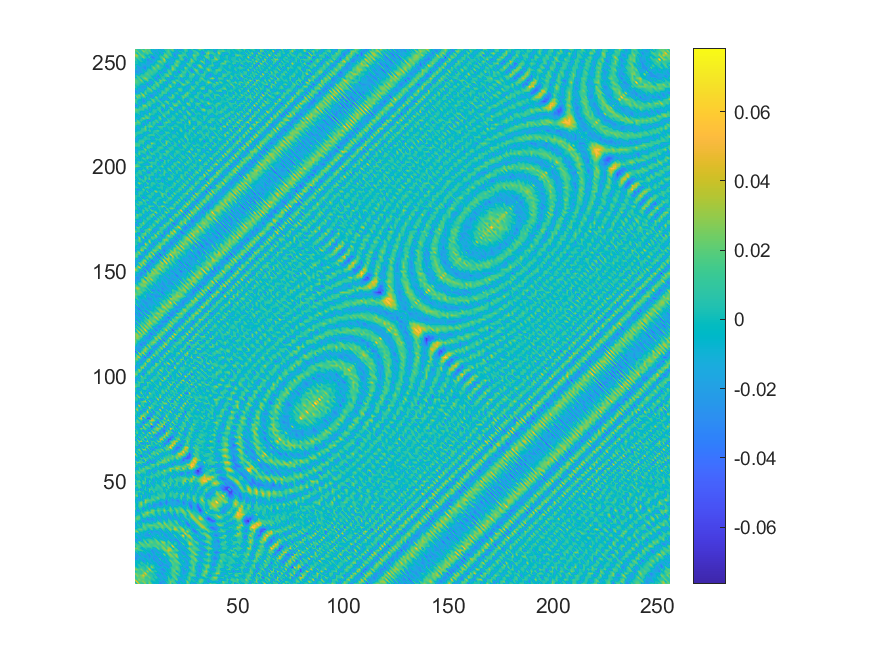}
	}
	\subfigure[$\mathrm{Im}(N_{jm}^{s}), k=8\pi$]{
		\includegraphics[width=0.3\textwidth]{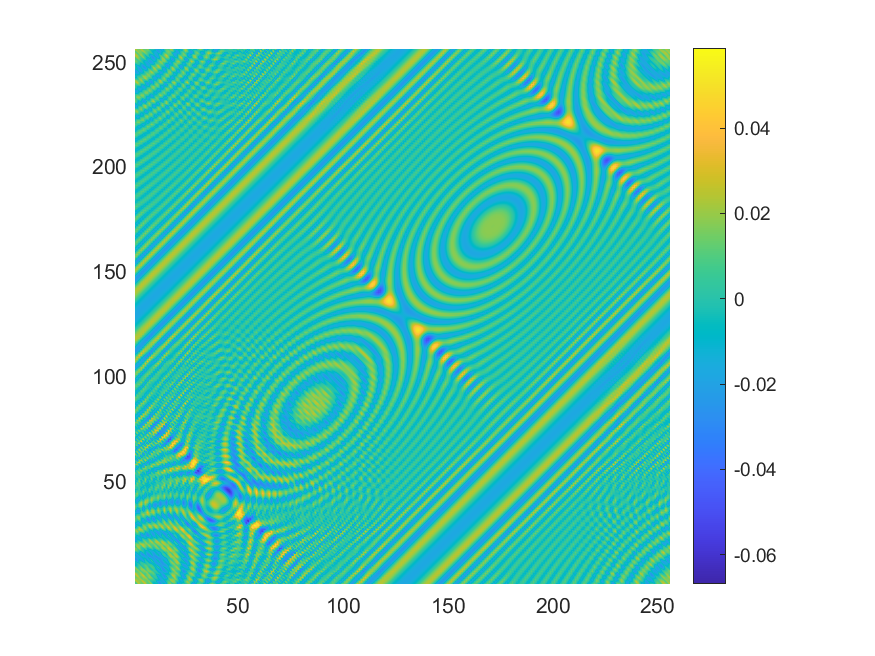}
	}
	\caption{\label{fig:ex4_multiscalar_Cjm}The imaginary parts of $C^{\delta}_{jm}$ and $N^{s}_{jm}$ with $k=4\pi$ (Top) and $k=8\pi$ (Bottom) for the multiscale case. The first column displays the imaginary parts of $C^{\delta}_{jm}$ with $k=4\pi$ and $k=8\pi$, and the second column displays the imaginary parts of $N^{s}_{jm}$ with $k=4\pi$ and $k=8\pi$.}
\end{figure}

\begin{figure}[h]
	\centering
	\subfigure[Exact shape]{
		\includegraphics[width=0.3\textwidth]{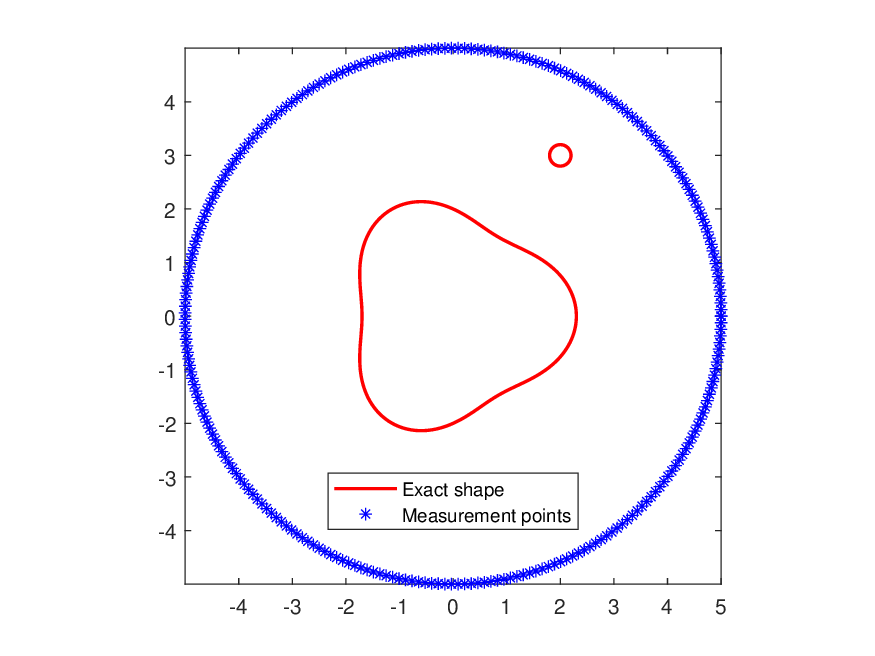}
	}
	\subfigure[$k=4\pi$]{
		\includegraphics[width=0.3\textwidth]{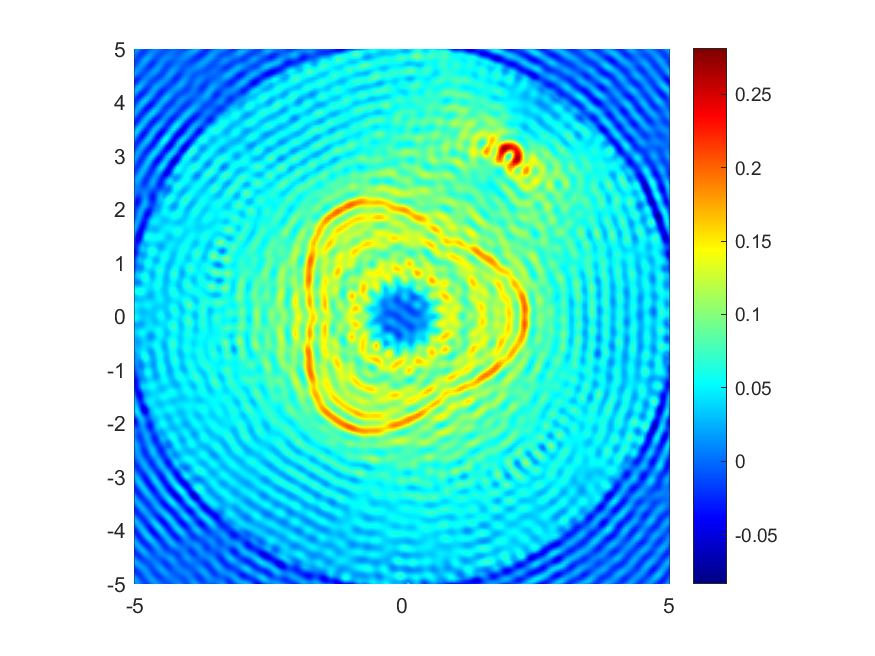}
	}
	\subfigure[$k=8\pi$]{
		\includegraphics[width=0.3\textwidth]{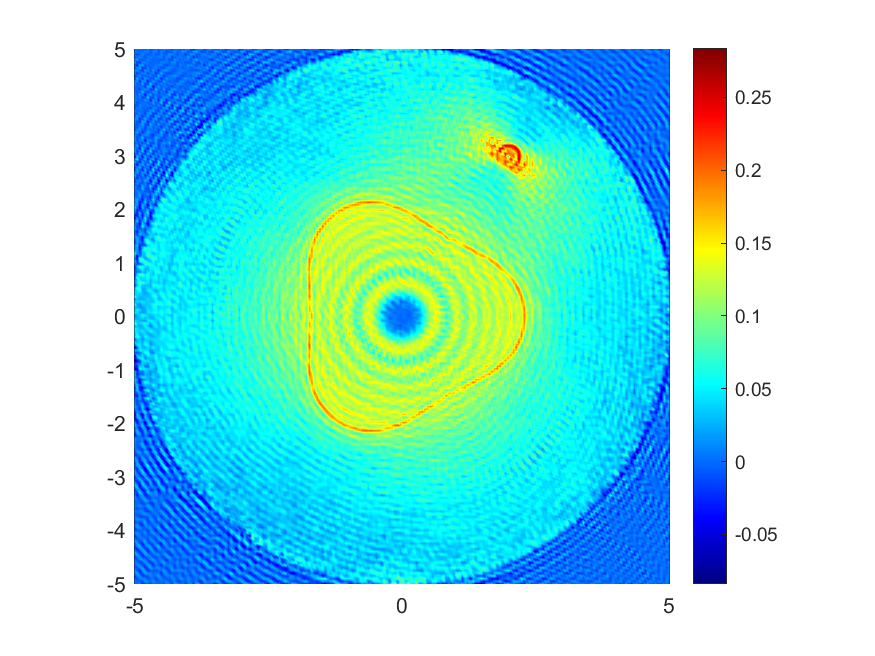}
	}
    \caption{\label{fig:ex4_multiscalar_curve}Recoveries of $\partial D$ by DCM with $k=4\pi$ and $k=8\pi$ for the multiscale case. The first column shows the exact boundary $\partial D$ and $L=256$ measurement points, the second column shows the reconstructed $\partial D$ with $k=4\pi$ and the third column shows the reconstructed $\partial D$ with $k=8\pi$.}
\end{figure}

\section{Summary}
This work focuses on developing a direct imaging method, known as the doubly cross-correlating method (DCM), for recovering a sound-soft obstacle in a passive imaging system. The fundamental challenges in this passive inverse scattering problem is the randomness of the incident sources, making direct solutions based on collected total fields inefficient. DCM addresses this by providing a cross-correlation between passive observations that can be linked to the active inverse scattering problem using the Helmholtz-Kirchhoff identity, thereby removing the randomness provided by incident point sources. This cross-correlation is then utilized to create an imaging function to visualize the obstacle. Because it relies merely on matrix multiplication, DCM is computationally efficient and simple to implement. Furthermore, the methodology presented in this work can be extended to sound-hard cases, penetrable cases, and even three-dimensional scenarios, which will be investigated in the future.

\bibliographystyle{plain}
\bibliography{DCM_passive_imaging}

\end{document}